\providecommand{\U}[1]{\protect\rule{.1in}{.1in}}
\newtheorem{thm}{Theorem}[section]
\newtheorem{cor}[thm]{Corollary}
\newtheorem{prop}[thm]{Proposition}
\newtheorem{defn}[thm]{Definition}
\newtheorem{exam}[thm]{Example}
\newtheorem{rem}[thm]{Remark}
\newcommand{\field}[1]{\mathbb{#1}}
\newcommand{\Z }{\field{Z}}
\theoremstyle{definition}
\theoremstyle{remark}
\theoremstyle{Definition}
\begin{document}

\title{On $1$-absorbing $\delta$-primary ideals}

\author{Abdelhaq El Khalfi}
\address{Abdelhaq El Khalfi  \\Laboratory of Modelling and Mathematical Structures \\Department of Mathematics, Faculty of Science and Technology of Fez, Box 2202,
University S.M. Ben Abdellah Fez, Morocco.
$$ E-mail\ address:\  abdelhaq.elkhalfi@usmba.ac.ma $$}

\author{Najib Mahdou}
\address{Najib Mahdou\\Laboratory of Modelling and Mathematical Structures \\Department of Mathematics, Faculty of Science and Technology of Fez, Box 2202,
University S.M. Ben Abdellah Fez, Morocco.
$$E-mail\ address:\ mahdou@hotmail.com$$}

\author{\"{U}NSAL TEK\.{I}R}
\address{\"{U}NSAL TEK\.{I}R\\
Department of Mathematics, Marmara University, Istanbul, Turkey.
\[
E-mail\ address:\ utekir@marmara.edu.tr
\]
}

\author{SUAT KO\c{C}}
\address{SUAT KO\c{C}\\
	Department of Mathematics, Marmara University, Istanbul, Turkey.
	\[
	E-mail\ address:\ suat.koc@marmara.edu.tr
	\]
}

\subjclass[2010]{13A99, 13C13.}

\keywords{$1$-absorbing prime ideal, $1$-absorbing $\delta$-primary ideal, $\delta$-primary ideal, trivial ring extension}
%Subject classification numbers        13B99           13B21

\begin{abstract}
Let $R$ be a commutative ring with nonzero identity. Let $\mathcal{I}(R)$ be the set of all ideals of $R$ and let $\delta : \mathcal{I}(R)\longrightarrow \mathcal{I}(R)$ be a function. Then $\delta$ is called an expansion function of ideals of $R$ if whenever $L, I, J$ are ideals of R with $J \subseteq I$, we have $L \subseteq \delta( L)$ and $\delta(J)\subseteq \delta(I)$. Let $\delta$ be an expansion
function of ideals of $R$. In this paper, we introduce and investigate a new class of ideals  that is closely related to the class of $\delta$-primary ideals. A proper ideal $I$ of $R$ is said to be a $1$-absorbing $\delta$-primary ideal if whenever nonunit elements $a,b,c \in R $ and $abc\in I$, then $ab \in I$ or $c\in \delta(I).$ Moreover, we give some basic properties of this class of ideals and we study the $1$-absorbing $\delta$-primary ideals of the localization of rings, the direct product of rings and the trivial ring extensions.
\end{abstract}

\maketitle

%%%%%%%%%%%%%%%%%%%%%%%%%%%%%%%%%%%%%%%%%%%%%%%%%%%%%%%%%

\bigskip
 %%%%%%%%%%%%%%%%%%%%%%%%%%%%%%%%%%%%%%

%%%%%%%%%%%%%%%%%%%%%%%%%%%%%%%%%%%%%%%%%%%%%%%%%%%%%%%%
%%%%%%%%%%%%%%%%%%%%%%%%%%%%%%%%%%%%%%%%%%%%%%%%%%%%%%%%
%%%%%%%%%%%%%%%%%%%%%%%%%%%%%%%%%%%%%%%%%%%%%%%%%%%%%%%%

%%%%%%%%%%%%%%%%%%%%%%%%%%%%%%%%%%%%%%%%%%%%%%%%%%%%%%%%%
%%%INTRODUCTION%%%%%%%%%%%%%%%%%%%%%%%%%%%%%%%%%%%%%%%%%%

\section{Introduction}
Throughout this paper, all rings are assumed to be commutative with  nonzero identity and all modules are nonzero unital. If $R$ is a ring, then $\sqrt{I}$ denotes the radical of an ideal $I$ of $R$, in the sense of \cite[page 17]{Kap}. Let also Spec$(R)$ denotes the set of all prime ideals of $R$.

The prime ideal, which is an important subject of ideal theory, has been widely studied by various authors. Among the many recent generalizations of the notion of prime ideals in the literature, we find the following, due to Badawi \cite{B}. A proper ideal $I$ of $R$ is said to be a $2$-absorbing ideal if whenever $a,b,c\in R$ and $abc\in I$, then $ab\in I$ or $ac\in I$ or $bc\in I$. In this case $\sqrt{I}=P$ is a prime ideal with $P^2\subseteq I$ or $\sqrt{I}=P_1\cap P_2$ where $P_1,P_2$ are incomparable prime ideals with $\sqrt{I}^2 \subseteq I$, cf. \cite[Theorem 2.4]{B}. Recently, Badawi and Yetkin \cite{Badawi} consider a new class of ideals called the class of $1$-absorbing primary ideals. A proper ideal $I$ of a ring $R$ is called a $1$-absorbing primary ideal of $R$ if whenever nonunit elements $a, b, c \in R$ and $abc \in I$, then $ab \in I$ or $c\in \sqrt{I}$. In \cite{YNN}, A. Yassine et. al introduced the concept of $1$-absorbing prime ideals which is a generalization of prime ideals. A proper ideal $I$ of $R$ is a $1$-absorbing prime ideal if whenever we take nonunit elements $a,b,c\in R$ with $abc\in I$, then $ab\in I$ or $c\in I$. In this case $\sqrt{I}=P$ is a prime ideal, cf. \cite[Theorem 2.3]{YNN}. And if $R$ is a ring in which exists a $1$-absorbing prime ideal that is not prime, then $R$ is a local ring, that is a ring with one maximal ideal.

Let $\mathcal{I}(R)$ be the set of all ideals of a ring $R$. Zhao \cite{Zh} introduced the concept of expansion of ideals of $R$. We recall from \cite{Zh} that a function $\delta : \mathcal{I}(R)\longrightarrow \mathcal{I}(R)$ is called an expansion function of ideals of $R$ if whenever $L, I, J$ are ideals of $R$ with $J \subseteq I$, we have $L \subseteq \delta( L)$ and $\delta(J)\subseteq \delta(I)$. Note that there are explanatory examples of expansion functions included in \cite[Example 1.2]{Zh} and \cite[Example 1]{BF}.
In addition, recall from \cite{Zh} that a proper ideal $I$ of $R$ is said to be a $\delta$-primary
ideal of $R$ if whenever $a, b \in R$ with $ab \in I$, we have $a \in I$ or $b \in \delta(I)$, where $\delta$ is an
expansion function of ideals of $R$. Also, recall from \cite{BSY} that a proper ideal $I$ of $R$ is called a $\delta$-semiprimary  ideal of $R$ if $ab \in I$  implies $a \in \delta(I)$ or $b \in \delta(I)$. In this paper, we introduce and investigate a new concept of ideals  that is closely related to the class of $\delta$-primary ideals. A proper ideal $I$ of $R$ is said to be a $1$-absorbing $\delta$-primary ideal if whenever nonunit elements $a,b,c \in R $ and $abc\in I$, then $ab \in I$ or $c\in \delta(I).$ For example, let
$\delta : \mathcal{I}(R)\longrightarrow \mathcal{I}(R)$ such that $\delta(I)=\sqrt{I}$ for each ideal $I$ of $R$. Then $\delta$ is an expansion function of ideals of $R$, and hence a proper ideal $I$ of $R$ is a $1$-absorbing $\delta$-primary  ideal of $R$ if and only if $I$ is a $1$-absorbing primary ideal of $R$. Among many results in this paper  are given to disclose the relations between this new class and others that already exist. The reader may find it helpful to keep in mind the implications noted in the following figure.
\vspace{-2cm}
\setlength{\unitlength}{1mm}
\begin{center}
\setlength{\unitlength}{1.7mm} \begin{picture}(140,27)(17,7)
%\put(39,30){1-absorbing primary ideal}
\put(34.5,18){$1$-absorbing prime ideal}
\put(64,18){$1$-absorbing $\delta$-primary ideal}
\put(17,18){prime ideal}
%\put(52,28){\vector(0,-1){7}}
\put(29,18.4){\vector(1,0){4.5}}
\put(59,18.4){\vector(1,0){4.5}}
\end{picture}
\end{center}
\vspace{-1.5cm}

Among other things, we give an example of $1$-absorbing $\delta$-primary ideal that is not $1$-absorbing prime ideal (Example \ref{exam1}). Also, we show $($Theorem \ref{thm2}$)$ that if a ring $R$ admits a $1$-absorbing $\delta$-primary ideal of $R$ that is not a $\delta$-primary ideal, then $R$ is a local ring. Moreover,  we prove that if $R$ is a chained ring with maximal ideal $M$, then the only $1$-absorbing $\delta$-primary ideals of $R$ are $M^2$ and the $\delta$-primary ideals of $R$ (Theorem \ref{thm4}). Finally, we give an idea about some $1$-absorbing $\delta$-primary ideal of the localization of rings, the direct product of rings and the trivial ring extensions.

\section{Main Results}

We start this section by the following definition.
\begin{defn}
A proper ideal $I$ of a ring $R$ is called a $1$-absorbing $\delta$-primary ideal if whenever $abc\in I$ for some nonunit elements $a,b,c\in I$, then $ab\in I$ or $c\in \delta(I)$.
\end{defn}

\begin{rem}\label{rem1}
Let $R$ be a ring, $I$ a proper ideal of $R$ and $\delta$ be an expansion function of $\mathcal{I(R)}$.
\begin{enumerate}
\item[$(1)$] If $\delta(I)=I$, then $I$ is a $1$-absorbing $\delta$-primary ideal of $R$ if and only if it is a $1$-absorbing prime ideal.
\item[$(2)$] If $\delta(I)=\sqrt{I}$, then $I$ is a $1$-absorbing $\delta$-primary ideal of $R$ if and only if it is a $1$-absorbing primary ideal.
\item[$(3)$] Every $1$-absorbing prime ideal is a $1$-absorbing $\delta$-primary ideal.
\item[$(4)$] Every $\delta$-primary ideal is a $1$-absorbing $\delta$-primary ideal.
\item[$(5)$] Let $\gamma$ be an expansion function of the ideal of $R$ such that $\delta(I) \subseteq \gamma(I)$. If $I$ is a $1$-absorbing $\delta$-primary ideal of $R$, then $I$ is a $1$-absorbing $\gamma$-primary ideal of $R$.
\end{enumerate}
\end{rem}

Next, we give an example of a $1$-absorbing $\delta$-primary ideal that is not a $1$-absorbing prime ideal.
\begin{exam}\label{exam1}
Let $R:=K[[X_1,X_2,X_3]]$ be a ring of formal power series where $K$ is a field. Consider the expansion function $\delta: \mathcal{I(R)}\longrightarrow \mathcal{I(R)}$ defined by $\delta(I)=I+M$ where $M=(X_1,X_2,X_3)$ is the maximal ideal of $R$. Let $I=(X_1X_2X_3)$ be an ideal of $R$. Thus, $I$ is not a $1$-absorbing prime ideal of $R$ since $X_1X_2X_3\in I$ but neither $X_1X_2\in I$ nor $X_3\in I$. Now, let $x,y,z$ be nonunit elements of $R$ such that $xyz\in I$. Clearly $I$ is a $1$-absorbing $\delta$-primary because $z\in \delta(I)=M$.
\end{exam}

\begin{prop}
	(i) Every 1-absorbing $\delta$-primary ideals is also a 2-absorbing $\delta$-primary ideal of $R$.
	
	(ii) Let $I$ be a 1-absorbing $\delta$-primary ideal of $R$ and $\delta(I)$ be a radical ideal, that is, $\sqrt{\delta(I)}=\delta(I)$. Then $I$ is a $\delta$-semiprimary ideal of $R$.
\end{prop}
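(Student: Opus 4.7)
\medskip

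\noindent\textbf{Proof plan.} Both parts reduce to straightforward case analyses that exploit the ideal-theoretic properties of $\delta(I)$ together with the containment $I \subseteq \delta(I)$ coming from the axioms of an expansion function (apply $L \subseteq \delta(L)$ to $L = I$).

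For part (i), the 2-absorbing $\delta$-primary condition requires that whenever $abc \in I$ one has $ab \in I$, $ac \in \delta(I)$, or $bc \in \delta(I)$. The plan is to split according to whether any of $a,b,c$ is a unit. If $c$ is a unit then $ab = c^{-1}(abc) \in I$; if $a$ (respectively $b$) is a unit then $bc$ (respectively $ac$) equals $a^{-1}(abc) \in I \subseteq \delta(I)$ (respectively $b^{-1}(abc) \in I \subseteq \delta(I)$). Once all three factors are nonunits, the 1-absorbing $\delta$-primary hypothesis applies directly and yields $ab \in I$ or $c \in \delta(I)$; in the latter case, multiplying by $a$ and using that $\delta(I)$ is an ideal gives $ac \in \delta(I)$.

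For part (ii), take $a,b \in R$ with $ab \in I$. If $a$ or $b$ is a unit then the other lies in $I \subseteq \delta(I)$ and we are done, so suppose both are nonunits. The key move is to rewrite $ab$ as a product of three nonunits, namely $a \cdot a \cdot b = a^{2}b \in I$, so that the 1-absorbing $\delta$-primary hypothesis can legitimately be invoked; this yields either $a \cdot a = a^{2} \in I$ or $b \in \delta(I)$. In the first alternative $a^{2} \in I \subseteq \delta(I)$, and the radical hypothesis $\sqrt{\delta(I)} = \delta(I)$ then forces $a \in \delta(I)$, which is exactly the $\delta$-semiprimary conclusion.

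I do not foresee any serious obstacle; the only subtleties are to dispose of the unit cases before applying the 1-absorbing hypothesis (whose definition explicitly demands nonunit factors) and to notice that the passage from $a^{2} \in I$ to $a \in \delta(I)$ in part (ii) genuinely uses both the expansion axiom $I \subseteq \delta(I)$ and the radical assumption on $\delta(I)$; without the latter the argument would break.
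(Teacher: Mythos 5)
Your proposal is correct and follows essentially the same argument as the paper: for (i) dispose of the unit cases and then apply the 1-absorbing $\delta$-primary hypothesis, noting that $c\in\delta(I)$ forces $ac\in\delta(I)$; for (ii) pass from $ab\in I$ to $a^{2}b\in I$ and use the radical hypothesis on $\delta(I)$ to get $a\in\delta(I)$ from $a^{2}\in I$. Your write-up is in fact slightly more careful than the paper's, which dismisses the unit cases in (i) with "we are done" rather than spelling them out as you do.
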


\begin{proof}
	(i) Let $abc \in I$ for some $a,b,c \in R$. If at least one of $a,b,c$ is a unit of $R$, then we are done. So assume that $a,b,c$ are nonunits $R$. Since $I$ is a 1-absorbing $\delta$-primary ideal of $R$, we get $ab \in I$ or $c \in \delta(I)$, which implies that $ab \in I$ or $ac \in \delta(I)$ or $bc \in \delta(I)$. Therefore, $I$ is a 2-absorbing $\delta$-primary ideal of $R$. 
	
	(ii) Suppose that $ab \in I$ for some $a,b \in R$. Then we may assume that $a,b$ are nonunits. Thus $a^2b \in I$ implies that $a^2 \in I$ or $b \in \delta(I)$. Then we have $a \in \sqrt{I} \subseteq \sqrt{\delta(I)}=\delta(I)$ or $b \in \delta(I)$. Hence, $I$ is a $\delta$-semiprimary ideal of $R$.
\end{proof}

The converse of previous theorem (i) is not true in general. See the following example.

\begin{exam}\textbf{(2-absorbing $\delta$-primary ideal that is not 1-absorbing $\delta$-primary ideal)}
	
	Let $R=\Z$, $I=pq\Z$, where $p \neq q$ are prime numbers, and $\delta(I)=I+p\Z$. Since $I$ is a 2-absorbing ideal, so is 2-absorbing $\delta$-primary. However, it is easy to see that $ppq \in I$, $p^2 \notin I$ and $q \notin \delta(I)$. Thus, $I$ is not a 1-absorbing $\delta$-primary ideal of $R$.

\end{exam}

In the next result, we show that if a ring $R$ admits a 1-absorbing $\delta$-primary ideal that is not a $\delta$-primary ideal, then $R$ is a local ring.
\begin{thm}\label{thm2}
Let $\delta$ be an ideal expansion. Suppose that a ring $R$ admits a $1$-absorbing $\delta$-primary ideal that is not a $\delta$-primary ideal. Then $R$ is a local ring.
\end{thm}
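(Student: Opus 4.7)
The plan is to argue by contradiction: assume $R$ is not local, so it has at least two distinct maximal ideals $M_1$ and $M_2$, and derive a contradiction by exploiting both the failure of $I$ to be $\delta$-primary and the $1$-absorbing $\delta$-primary property of $I$ in tandem.

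First, since $I$ is $1$-absorbing $\delta$-primary but not $\delta$-primary, there must exist $a,b\in R$ with $ab\in I$, $a\notin I$ and $b\notin \delta(I)$. A short preliminary step checks that both $a$ and $b$ are necessarily nonunits: if $a$ were a unit then $b=a^{-1}(ab)\in I\subseteq \delta(I)$, contradicting $b\notin\delta(I)$; and symmetrically, if $b$ were a unit then $a=b^{-1}(ab)\in I$, contradicting $a\notin I$.

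Next, since $M_1\neq M_2$ are maximal, $M_1+M_2=R$, so I can pick $m_1\in M_1$ and $m_2\in M_2$ with $m_1+m_2=1$; both $m_1,m_2$ are nonunits. Now the key move is to apply the $1$-absorbing $\delta$-primary condition to the triple $(a,m_i,b)$ (for $i=1,2$), where the role of ``$c$'' in the definition is played by $b$, not by $m_i$ (the latter choice would only yield the trivial consequence $ab\in I$). Since $ab\in I$ we have $a\,m_i\,b\in I$; all three factors are nonunits, so the definition forces either $a m_i\in I$ or $b\in\delta(I)$. Because $b\notin\delta(I)$, we conclude $a m_1\in I$ and $a m_2\in I$, and therefore
\[
a \;=\; a(m_1+m_2) \;=\; a m_1+a m_2 \;\in\; I,
\]
contradicting $a\notin I$. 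Hence $R$ has a unique maximal ideal.

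The only subtle point---the ``main obstacle''---is the careful assignment of roles in the definition of $1$-absorbing $\delta$-primary: one must place $b$ in the ``$c$''-slot so that the alternative $c\in\delta(I)$ can be ruled out, while placing $m_i$ alongside $a$ in the ``$ab$''-slot. Once this choice is made, the proof is a direct consequence of the coprimality $m_1+m_2=1$ obtained from having two distinct maximal ideals.
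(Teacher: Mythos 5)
Your proof is correct. The central trick is the same one the paper uses: for any nonunit $x$, the product $x\,a\,b=(ab)x$ lies in $I$, and since $b\notin\delta(I)$ the $1$-absorbing $\delta$-primary condition (with $b$ placed in the ``$c$''-slot) forces $xa\in I$. Where you diverge is in how non-locality is brought to bear. The paper fixes an arbitrary nonunit $d$ and an arbitrary unit $c$, shows that if $d+c$ were a nonunit then $(d+c)a\in I$ together with $da\in I$ would give $a\in I$, concludes that nonunit plus unit is always a unit, and then invokes an external lemma (\cite[Lemma 1]{Badawi}) to deduce that $R$ is local. You instead assume two distinct maximal ideals $M_1\neq M_2$, write $1=m_1+m_2$ with $m_i\in M_i$ nonunits, obtain $am_1,am_2\in I$, and get $a=am_1+am_2\in I$ directly. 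Your route is self-contained (no appeal to the Badawi lemma characterizing local rings) and slightly shorter; the paper's route establishes the stronger intermediate fact that every nonunit-plus-unit is a unit, which is the standard criterion for locality. Your preliminary check that $a$ and $b$ are nonunits is a detail the paper glosses over (it simply asserts the witnesses are nonunit), and it is worth including as you did.
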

\begin{proof}
 Assume that $I$ is a $1$-absorbing $\delta$-primary ideal that is not a $\delta$-primary ideal of $R.$  Hence there exist nonunit elements $a, b \in R$ such that $ab\in I$,  $a \notin I$ and $b \notin \delta(I)$. Let $d$ be a nonunit element of $R .$ As $dab \in I$, $I$ is a $1$-absorbing $\delta$-primary ideal of $R$ and $b\notin \delta(I),$ we conclude that $da \in I .$ Let $c$ be a unit element of $R .$ Suppose that $d+c$ is a nonunit element of $R .$ Since $(d+c) ab \in I$, $I$ is a $1$-absorbing $\delta$-primary ideal of $R$ and $b \notin \delta(I)$, we get that $(d+c)a=da+c a \in I .$ Since $da \in I,$ we conclude that $a \in I,$ which gives a contradiction. Hence, $d+c$ is a unit element of $R$. Now, the result follows from \cite[Lemma 1]{Badawi}.
\end{proof}

Next, we give a method to construct $1$-absorbing $\delta$-primary ideals that are not $\delta$-primary ideals.
\begin{thm}\label{thm1}
Let $R$ be a local ring with maximal ideal $M$ and $\delta$ be an ideal expansion. Let $x$ be a nonzero prime element of $R$ such that $\delta (xM) \subsetneq M$. If $x\in \delta(xM)$, then $xM$ is a $1$-absorbing $\delta$-primary ideal of $R$ that is not a $\delta$-primary ideal of $R$.
\end{thm}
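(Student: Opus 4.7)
The plan is to verify the two claims separately: first that $xM$ is $1$-absorbing $\delta$-primary, then that it fails to be $\delta$-primary. The two key tools are that $R$ is local (so nonunits are exactly the elements of $M$) and that $x$ is prime (so $x\mid abc$ forces $x$ to divide one of the factors). A small but crucial observation is that the hypothesis $x\in\delta(xM)$ actually gives $(x)\subseteq\delta(xM)$, since $\delta(xM)$ is an ideal.

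For the first claim, suppose $a,b,c\in R$ are nonunits with $abc\in xM\subseteq(x)$. Primality of $x$ splits the argument into three cases. If $x\mid c$, then $c\in(x)\subseteq\delta(xM)$, and we are done. If $x\mid a$, write $a=xa'$ so that $ab=x(a'b)$; since $b$ is a nonunit in the local ring $R$, we have $b\in M$, hence $a'b\in M$ and so $ab\in xM$. The case $x\mid b$ is symmetric, using that $a\in M$. This exhausts all cases, so $xM$ is $1$-absorbing $\delta$-primary.

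For the second claim, I would produce a counterexample to the $\delta$-primary property. Since $\delta(xM)\subsetneq M$, pick $m\in M\setminus\delta(xM)$. Then $xm\in xM$, and $m\notin\delta(xM)$ by choice. It remains only to check $x\notin xM$: if $x=xm'$ for some $m'\in M$, then $x(1-m')=0$, and because $R$ is local, $1-m'$ is a unit, forcing $x=0$, contradicting the assumption that $x$ is a nonzero prime element. Hence $xM$ is not $\delta$-primary.

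There is no serious obstacle here; the only place where care is needed is noticing that the hypothesis $x\in\delta(xM)$ must be upgraded to $(x)\subseteq\delta(xM)$ in order to dispose of the case $x\mid c$ in the first part, and that nonzeroness of $x$ is precisely what prevents $x$ from sitting inside $xM$ (this is where the hypothesis "$x$ nonzero" gets used). Everything else is a routine combination of primality of $x$ and the local-ring dictionary between nonunits and elements of $M$.
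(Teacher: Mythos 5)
Your proof is correct and follows essentially the same route as the paper: both parts rest on the primality of $x$ together with the local-ring identification of nonunits with elements of $M$, and on upgrading $x\in\delta(xM)$ to $xR\subseteq\delta(xM)$. The only cosmetic differences are that the paper argues the first part contrapositively (assuming $ab\notin xM$ and deducing $c\in xR$) where you do a direct case split, and that for $x\notin xM$ the paper cites a lemma of Badawi--Celikel while you supply the short unit argument $x(1-m')=0$ directly.
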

\begin{proof}
First, we will show that $xM$ is a $1$-absorbing $\delta$-primary ideal of $R$. Assume that $abc \in xM$ for some nonunit elements $a,b,c \in R$. If $ab\notin xM$, then $a\notin xR$ and $b\notin xR$, so $ab\notin xR$ because $x$ is a prime element of $R$. Moreover, the fact that $abc\in xR$ and $ab\notin xR$ implies that $c\in xR\subseteq \delta(xM)$. Now, we prove that $xM$ is not a $\delta$-primary ideal of $R$. By hypothesis, we can pick an element $a\in M\setminus \delta(xM)$, hence $xa\in xM$. However, $x\notin xM$ since $x$ is an irreducible element of $R$ by \cite[Lemma 2]{Badawi}. Which implies that $xM$ is not a $\delta$-primary ideal, this completes the proof.
\end{proof}

\begin{thm}\label{thm3}
Let $I$ be a $1$-absorbing $\delta$-primary ideal of a ring $R$ where $\delta$ is an ideal expansion and let $d \in R\setminus I$ be a nonunit element of $R$. Then $(I:d)=\{x\in R\mid  dx\in I\}$ is a $\delta$-primary ideal of $R$.
\end{thm}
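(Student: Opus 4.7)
The plan is to verify the two defining conditions of a $\delta$-primary ideal for $(I:d)$: that it is proper, and that $xy \in (I:d)$ forces $x \in (I:d)$ or $y \in \delta((I:d))$. Properness is immediate: if $(I:d) = R$ then $1 \cdot d = d \in I$, contradicting $d \in R\setminus I$.

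For the main condition, suppose $xy \in (I:d)$, so that $dxy \in I$. I would split into cases according to whether $x$ or $y$ is a unit. If $x$ is a unit, then multiplying $dxy \in I$ by $x^{-1}$ gives $dy \in I$, so $y \in (I:d) \subseteq \delta((I:d))$, where the inclusion uses the first property of an expansion function. Symmetrically, if $y$ is a unit, then $dx \in I$, so $x \in (I:d)$.

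The remaining case is when both $x$ and $y$ are nonunits. Here the assumption that $d$ itself is a nonunit becomes crucial: we have a product $d \cdot x \cdot y \in I$ of three nonunits, so the $1$-absorbing $\delta$-primary hypothesis on $I$ yields $dx \in I$ or $y \in \delta(I)$. In the first subcase $x \in (I:d)$. In the second subcase, the monotonicity $I \subseteq (I:d)$ combined with the second property of $\delta$ gives $\delta(I) \subseteq \delta((I:d))$, hence $y \in \delta((I:d))$. Either way the required conclusion holds.

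There is no real obstacle here; the only subtlety is to notice that the hypothesis ``$d$ nonunit'' is precisely what allows one to invoke the $1$-absorbing property (which requires three nonunit factors), and that expansion-function monotonicity is what lets one pass from $\delta(I)$ to $\delta((I:d))$. The unit cases must be treated separately because the $1$-absorbing definition does not apply to them, but they are handled directly by cancellation.
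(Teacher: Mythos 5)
Your proof is correct and follows essentially the same route as the paper's: apply the $1$-absorbing $\delta$-primary hypothesis to the triple product $dxy\in I$ of nonunits and use monotonicity of $\delta$ to pass from $\delta(I)$ to $\delta((I:d))$. You are merely more explicit than the paper, which dispatches the unit cases with a ``without loss of generality'' and omits the (easy but necessary) check that $(I:d)$ is proper.
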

\begin{proof}
 Suppose that $ab\in (I:d)$ for some elements $a,b\in R$. Without loss of generality, we may assume that $a$ and $b$ are nonunit elements of $R$. Suppose that $a\notin (I:d)$. Since $dab\in I$ and $I$ is a $1$-absorbing $\delta$-primary ideal of $R$, we conclude that $b\in \delta(I)$. So, $b\in \delta((I:d))$ and this completes the proof.
\end{proof}

\begin{prop}\label{pro1} Let $R$ be a ring, $\delta$ an ideal expansion and $I$ be a proper ideal of $R$. If $I$ is a $1$-absorbing $\delta$-primary ideal of $R$, then either $I$ is a $\delta$-semiprimary ideal of $R$ or $R$ is local, say with maximal ideal $M$, such that $M^2\subseteq I$.
\end{prop}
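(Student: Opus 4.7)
The plan is to prove the contrapositive: assume $I$ is $1$-absorbing $\delta$-primary but \emph{not} $\delta$-semiprimary, and derive that $R$ is local with maximal ideal $M$ satisfying $M^2 \subseteq I$.

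Since $I$ is not $\delta$-semiprimary, I can pick $a,b \in R$ with $ab \in I$ but $a \notin \delta(I)$ and $b \notin \delta(I)$. Using the general fact that $I \subseteq \delta(I)$ (part of the definition of an expansion function), the elements $a$ and $b$ must be nonunits: if, say, $a$ were a unit, then $b = a^{-1}(ab) \in I \subseteq \delta(I)$, a contradiction. For the same reason, $a \notin I$ and $b \notin I$. In particular, $ab \in I$ witnesses that $I$ fails to be $\delta$-primary (since $a \notin I$ and $b \notin \delta(I)$), so Theorem~\ref{thm2} applies and $R$ is local. Let $M$ denote its unique maximal ideal, so that $M$ is exactly the set of nonunits of $R$.

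Next I would establish the auxiliary containment $Ma \subseteq I$. For any $c \in M$, the three elements $c,a,b$ are all nonunits and $cab = c(ab) \in I$, so the $1$-absorbing $\delta$-primary property yields $ca \in I$ or $b \in \delta(I)$. Since the latter is ruled out, $ca \in I$, proving $Ma \subseteq I$.

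Finally, to get $M^2 \subseteq I$, take arbitrary $x,y \in M$. From $Ma \subseteq I$ we have $ya \in I$, hence $xya = x(ya) \in I$. The elements $x, y, a$ are all nonunits, so the $1$-absorbing $\delta$-primary hypothesis gives $xy \in I$ or $a \in \delta(I)$; the second option is excluded, so $xy \in I$. This proves $M^2 \subseteq I$, completing the argument.

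The only subtle point is step one — making sure that from failure of $\delta$-semiprimariness one can extract the witness pair $(a,b)$ with both elements outside $\delta(I)$ \emph{and} both nonunits, because Theorem~\ref{thm2} and the subsequent $1$-absorbing manipulations all require nonunit hypotheses. Once that is secured, the two applications of the $1$-absorbing property (first to $cab$, then to $xya$) and the invocation of Theorem~\ref{thm2} finish the proof mechanically.
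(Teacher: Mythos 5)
Your proof is correct and follows essentially the same route as the paper: invoke Theorem~\ref{thm2} to get that $R$ is local, then apply the $1$-absorbing $\delta$-primary property twice to products built from the witness pair $a,b\notin\delta(I)$ with $ab\in I$ to conclude $M^2\subseteq I$. Your explicit check that the witnesses $a,b$ are nonunits (and that $a\notin I$, so the pair also witnesses failure of $\delta$-primariness) is a small point the paper glosses over, but the argument is the same.
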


\begin{proof} If $R$ is not local, then Theorem \ref{thm2} implies that $I$ is $\delta$-primary and so $I$ is a $\delta$-semiprimary ideal of $R$. Now, assume that $R$ is local with maximal ideal $M$ such that $I$ is not a $\delta$-semiprimary ideal of $R$. Since $I$ is proper, we infer that $I\subseteq M$. Moreover, there are $a,b\in M\setminus \delta(I)$ such that $ab\in I$. To prove that $M^2\subseteq I$, it suffices to show that $xy\in I$ for all $x,y\in M$. Let $x,y\in M$. Then $xyab\in I$. Since $xy,a,b\in M$, $b\not\in \delta(I)$ and $I$ is a $1$-absorbing $\delta$-primary ideal, we conclude that $xya\in I$. Again, since $x,y,a\in M$, $a\not\in \delta(I)$ and $I$ is a $1$-absorbing $\delta$-primary ideal, we have that $xy\in I$.
\end{proof}

Recall that a ring $R$ is a chained ring if the set of all ideals of $R$ is linearly ordered by inclusion. Moreover, $R$ is said to be an arithmetical ring if $R_M$ is a chained ring for each maximal ideal $M$ of $R$. We next determinate the $1$-absorbing $\delta$-primary ideals of a chained ring.
\begin{thm}\label{thm4}
Let $R$ be a chained ring with maximal ideal $M$, $\delta$ an ideal expansion and $I$ be a proper ideal of $R$ such that $I\neq M^2$. Then $I$ is a $1$-absorbing $\delta$-primary ideal of $R$ if and only if $I$ is a $\delta$-primary ideal of $R$.
\end{thm}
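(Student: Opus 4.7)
The backward implication is immediate from Remark~\ref{rem1}(4), so I would concentrate on the forward direction: a $1$-absorbing $\delta$-primary ideal $I \neq M^2$ in a chained ring must be $\delta$-primary. My plan is to invoke Proposition~\ref{pro1}, which (since $R$ is automatically local) splits the hypothesis into two non-exclusive cases: either $M^2 \subseteq I$, or $I$ is $\delta$-semiprimary. I then dispose of each case using the linear ordering of ideals in $R$.

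For the case $M^2 \subseteq I$, the hypothesis $I \neq M^2$ upgrades this to $M^2 \subsetneq I$. I would fix some $x \in I \setminus M^2$ and argue that every nonunit $a \in M$ lies in $I$. The comparability of $(a)$ and $(x)$ does the work: if $(a) \subseteq (x)$ then $a \in (x) \subseteq I$ immediately; if instead $(x) \subseteq (a)$, writing $x = ad$ forces $d$ to be a unit, since otherwise $x \in aM \subseteq M^2$ would contradict $x \notin M^2$, and so $(a) = (x) \subseteq I$. This gives $M \subseteq I$, and since $I$ is proper, $I = M$, which is prime and hence $\delta$-primary.

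For the case that $I$ is $\delta$-semiprimary, I would start from $ab \in I$ with $a, b$ nonunits and $b \notin \delta(I)$, aiming to conclude $a \in I$. Comparability of $(a)$ and $(b)$ lets me write the smaller generator as a multiple of the larger, so $ab$ takes the form $a^2 c$ or $b^2 c$. Applying the $1$-absorbing axiom to this triple produces either $a^2 \in I$ (respectively $b^2 \in I$) or $c \in \delta(I)$. In most sub-branches the conclusion collapses quickly: if $a^2 \in I$ then the $\delta$-semiprimary hypothesis gives $a \in \delta(I)$, and multiplying by $c$ forces $b = ac \in \delta(I)$, contradicting $b \notin \delta(I)$; symmetrically, $b^2 \in I$ gives $b \in \delta(I)$ directly; and when $c \in \delta(I)$ with $b = ac$, we again get $b \in \delta(I)$.

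The main obstacle is the one remaining sub-branch, namely $(a) \subsetneq (b)$ with $a = bc$, $c$ a nonunit in $\delta(I)$, and $b^2 \notin I$: here neither the $\delta$-semiprimary property nor the first application of the $1$-absorbing axiom immediately yields $a \in I$. The trick I would use is the permutation symmetry of the $1$-absorbing condition: reapplying the axiom to the very same product $c \cdot b \cdot b = ab \in I$, but now singling out one of the $b$'s in the ``distinguished'' third position, gives $cb \in I$ or $b \in \delta(I)$; the second option is ruled out by the standing hypothesis $b \notin \delta(I)$, so $a = bc \in I$, as required. With this permutation observation in hand, every branch closes.
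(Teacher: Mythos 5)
Your proposal is correct and follows the paper's argument in all essentials: both proofs split via Proposition~\ref{pro1}, settle the case $M^2\subseteq I$ by comparability of principal ideals to force $I=M$, and close the $\delta$-semiprimary case by the same key move of reapplying the $1$-absorbing condition to the rewritten product $c\cdot b\cdot b\in I$ to get $a=bc\in I$. The paper reaches that final step more directly (from $a\in\delta(I)$ and $b\notin\delta(I)$ it deduces at once that $a\in bR$ with a nonunit cofactor), whereas you run a case analysis on which of $(a)$, $(b)$ is larger and eliminate the extra branches; just note that when $(a)=(b)$ the cofactor is a unit, so there one concludes $b^2\in I$ directly rather than via the $1$-absorbing axiom.
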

\begin{proof}
We need only prove the ``only if'' assertion. Let $I$ be a $1$-absorbing $\delta$-primary ideal. Thus, Proposition \ref{pro1} gives that either $I$ is a $\delta$-semiprimary ideal of $R$ or $M^2\subseteq I$. First, assume that $I$ is a $\delta$-semiprimary ideal of $R$ and $ab\in I$ for some nonunit elements $a,b\in R$ such that $b\notin \delta(I)$. Hence, $a\in \delta(I)$. Now, since $R$ is a chained ring, we conclude that $a\in bR$ and thus $a=br$ for some nonunit element $r\in R$. As $brb\in I$, $b\notin \delta(I)$ and $I$ is a $1$-absorbing $\delta$-primary ideal of $R$, we conclude that $a=br\in I$. Which gives that $I$ is a $\delta$-primary ideal of $R$. Now, we suppose that $M^2\subseteq I$. We may assume that $M\neq I$. Thus, we can pick $a\in M\setminus I$ and $b\in I\setminus M^2.$ Then $b\in aR$ since $R$ is a chained ring. So, $b=ar$ for some nonunit element $r\in R$ and thus $b\in M^2$, a contradiction.  This completes the proof.
\end{proof}

In view of Theorem \ref{thm4}, we have the following result.
\begin{cor}
Let R be an arithmetical ring with Jacobson radical $M$ and $I$ be a proper ideal of $R$ such that $I\neq M^2$. Then $I$ is a $1$-absorbing $\delta$-primary ideal of $R$ if and only if $I$ is a $\delta$-primary ideal of $R$.
\end{cor}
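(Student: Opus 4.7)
The reverse direction is immediate from Remark \ref{rem1}(4), so the real content is the forward implication. My plan is to argue by contradiction, reducing the problem to the chained ring case already handled by Theorem \ref{thm4}.

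First I would suppose, toward a contradiction, that $I$ is a $1$-absorbing $\delta$-primary ideal of $R$ that is \emph{not} a $\delta$-primary ideal. Theorem \ref{thm2} then applies directly and forces $R$ to be local. Since in a local ring the unique maximal ideal coincides with the Jacobson radical, the maximal ideal of $R$ is exactly the $M$ given in the hypothesis.

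Next I would use the arithmetical hypothesis together with locality: since $R$ is arithmetical, $R_M$ is a chained ring, and because $R$ is already local with maximal ideal $M$, we have $R \cong R_M$, so $R$ itself is chained with maximal ideal $M$. At this point the situation is precisely the one covered by Theorem \ref{thm4}, and the assumption $I \neq M^2$ is the one the theorem requires.

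Finally, I would apply Theorem \ref{thm4} to conclude that $I$ is $\delta$-primary, contradicting the assumption that $I$ is not $\delta$-primary. This contradiction completes the proof. The whole argument is essentially a bookkeeping reduction, so I do not expect any serious obstacle; the only subtle point is verifying that the Jacobson radical hypothesis really does hand us the maximal ideal of the local ring produced by Theorem \ref{thm2}, which is automatic once locality is established.
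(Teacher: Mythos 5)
Your proof is correct and uses exactly the same two ingredients as the paper (Theorem \ref{thm2} to handle the non-local situation and Theorem \ref{thm4} for the chained local case); the paper merely organizes it as a case split on whether $R$ is local, whereas you phrase it as a contradiction argument. The logical content is identical, so there is nothing to add.
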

\begin{proof}
Assume that $R$ is local with maximal ideal $M$. Since $R$ is an arithmetical ring, we conclude that $R = R_M$ is a chained ring and thus the claim follows from Theorem \ref{thm4}. In the remaining case, suppose that $R$ is not a local ring. Then the result follows by Theorem \ref{thm2}.
\end{proof}
\begin{prop}
Let $R$ be a local ring with principal maximal ideal $M$, $\delta$ an ideal expansion and $I$ be a proper ideal of $R$. Then $I$ is a $1$-absorbing $\delta$-primary ideal of $R$ if and only if either $I$ is a $\delta$-primary ideal of $R$ or $M^2\subseteq I$.\\ In addition, if $\sqrt{I}\subseteq \delta(I)$ then $I$ is a $1$-absorbing $\delta$-primary ideal of $R$ if and only if $I$ is a $\delta$-primary ideal of $R$.
\end{prop}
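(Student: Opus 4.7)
The plan is to establish the forward and backward directions separately, exploiting that in the local ring $(R,M)$ the nonunits coincide with the elements of the principal maximal ideal $M=(m)$, together with Proposition~\ref{pro1}.

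For the \textbf{backward} direction, the case where $I$ is $\delta$-primary is immediate from Remark~\ref{rem1}(4). If instead $M^2\subseteq I$, then for any three nonunits $a,b,c\in R$ the elements $a,b$ lie in $M$, so $ab\in M^2\subseteq I$, which directly yields the 1-absorbing $\delta$-primary property.

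For the \textbf{forward} direction, assume $I$ is 1-absorbing $\delta$-primary and $M^2\not\subseteq I$; Proposition~\ref{pro1} then forces $I$ to be $\delta$-semiprimary, and I will show $I$ is $\delta$-primary. Given $ab\in I$ with $b\notin\delta(I)$, I may assume $a,b$ are nonunits (if $b$ is a unit we get $a\in I$ directly; $a$ cannot be a unit, for otherwise $b=a^{-1}(ab)\in I\subseteq\delta(I)$). Write $a=rm$. The key split is on whether $r$ is a unit. When $r$ is a nonunit, the triple $(r,m,b)$ of nonunits satisfies $rmb=ab\in I$ with $b\notin\delta(I)$, so the 1-absorbing $\delta$-primary hypothesis forces $a=rm\in I$. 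The main obstacle is to rule out the case $r\in R^{\times}$, i.e.\ $(a)=M$. In that subcase write $b=au$ for some $u\in R$; then $a^{2}u=ab\in I$. If $u$ is a unit, multiplying by $u^{-1}$ gives $a^{2}\in I$, whence $M^2=(a^2)\subseteq I$, contradicting the standing assumption. If $u$ is a nonunit, the triple $(a,a,u)$ of nonunits forces either $a^2\in I$ (same contradiction) or $u\in\delta(I)$, and in the latter case $b=au\in\delta(I)$, contradicting $b\notin\delta(I)$. Hence $(a)\neq M$ and the first case applies.

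For the \textbf{additional clause}, under $\sqrt{I}\subseteq\delta(I)$ the main equivalence reduces to showing that $M^2\subseteq I$ already implies $I$ is $\delta$-primary. Indeed, $M=\sqrt{M^2}\subseteq\sqrt{I}\subseteq\delta(I)$, and maximality of $M$ forces $\delta(I)\in\{M,R\}$. In either situation the $\delta$-primary implication is immediate: if $ab\in I$ and $b\notin\delta(I)$, then $b\notin M$, so $b$ is a unit and $a=b^{-1}(ab)\in I$.
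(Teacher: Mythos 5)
Your proof is correct. It follows the same basic skeleton as the paper's: reduce to showing that a $1$-absorbing $\delta$-primary ideal with $M^2\not\subseteq I$ is $\delta$-primary, write the nonunit $a$ as $a=rm$ with $M=(m)$, and split on whether $r$ is a unit; the case $r$ nonunit is handled identically in both proofs via the triple $(r,m,b)$. Where you diverge is the case $r$ a unit, i.e.\ $(a)=M$: the paper first invokes $\delta$-semiprimariness to get $a\in\delta(I)$, concludes $\delta(I)=M$ (after assuming $\delta(I)\neq R$), and observes that $I$ is then trivially $\delta$-primary; you instead write $b=au$ and use the triple $(a,a,u)$ to force $a^2\in I$, hence $M^2=(a^2)\subseteq I$, contradicting the standing assumption. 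Your route is slightly more self-contained: you never actually use the $\delta$-semiprimary property (your appeal to Proposition~\ref{pro1} is superfluous and could be deleted), whereas the paper's argument genuinely depends on it. You also supply an explicit argument for the ``in addition'' clause ($M\subseteq\sqrt{I}\subseteq\delta(I)$ forces any $b\notin\delta(I)$ to be a unit), which the paper dismisses as clear, and you spell out the easy backward implication for $M^2\subseteq I$ that the paper leaves implicit. Both approaches are valid; yours trades the semiprimary shortcut for one extra subcase but yields a more complete write-up.
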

\begin{proof}
By Remark \ref{rem1}(4) and Proposition \ref{pro1}, we need only prove that if $I$ is a $1$-absorbing $\delta$-primary ideal of $R$ which is a $\delta$-semiprimary ideal then $I$ is a $\delta$-primary ideal (along with the hypothesis that $R$ be a local ring with principal maximal ideal $M$). Also, we may assume that $\delta(I)\neq R$. Set $M=xR$ and let $a$ and $b$ be nonunit elements of $R$ such that $b\notin \delta(I)$ and $ab \in I$. Since $I$ is a $\delta$-semiprimary ideal of $R$, we get that $a \in \delta(I)$. Moreover, $a=rx$ for some $r\in R$. If $r$ is a unit element of $R$ then $M=\delta(I)$ and thus $I$ is a $\delta$-primary ideal. If $r$ is a nonunit element of $R$ then $rxb=ab\in I$. That implies $a=rx\in I$ since $I$ is a $1$-absorbing $\delta$-primary ideal. This completes the proof. The in addition statement is clear.
\end{proof}

\begin{prop}
Let $\{J_i \mid i \in D\}$ be a directed set of $1$-absorbing $\delta$-primary ideals of $R$, where $\delta$ is an ideal expansion. Then the ideal $J = \cup _{i\in D}J_i$ is a $1$-absorbing $\delta$-primary ideal of $R$.
\end{prop}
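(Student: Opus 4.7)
The plan is to run a routine ``one-index capture'' argument, which is standard for directed unions, and then invoke the $1$-absorbing $\delta$-primary property inside a single member $J_i$.

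First, I would check that $J := \bigcup_{i \in D} J_i$ is actually a proper ideal. That the directed union of ideals is an ideal is standard: for $x,y \in J$ we have $x \in J_i$ and $y \in J_j$ for some $i,j \in D$, and by directedness there is $k \in D$ with $J_i, J_j \subseteq J_k$, so $x \pm y \in J_k \subseteq J$, and closure under multiplication by $R$ is immediate. Properness follows because if $1 \in J$, then $1 \in J_i$ for some $i$, contradicting that $J_i$ is a proper ideal.

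Next, suppose $abc \in J$ for some nonunit elements $a,b,c \in R$. By definition of the union, there exists an index $i \in D$ with $abc \in J_i$. Since $J_i$ is a $1$-absorbing $\delta$-primary ideal of $R$ and $a,b,c$ are nonunit, we obtain $ab \in J_i$ or $c \in \delta(J_i)$. In the first case, $ab \in J_i \subseteq J$, as desired. In the second case, the monotonicity of the expansion function $\delta$, together with $J_i \subseteq J$, yields $\delta(J_i) \subseteq \delta(J)$, hence $c \in \delta(J)$.

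There is no real obstacle here; the only thing to remember is the definition of an expansion function, which gives exactly the two ingredients needed: containment $J_i \subseteq J \Rightarrow \delta(J_i) \subseteq \delta(J)$, and the fact that the union really is an ideal. Together with the directed-union trick of pulling the single relation $abc \in J$ back to some fixed $J_i$, this completes the verification.
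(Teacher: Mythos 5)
Your argument is correct and is essentially the paper's own proof: pull the relation $abc\in J$ back to a single $J_i$ via the union, apply the $1$-absorbing $\delta$-primary property there, and use monotonicity of $\delta$ to get $\delta(J_i)\subseteq\delta(J)$. The only difference is that you also verify $J$ is a proper ideal (using directedness), a routine step the paper leaves implicit.
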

\begin{proof}
Let $abc \in J$, then $abc\in J_{i}$ for some $i\in D$. Since $J_{i}$ is a $1$-absorbing $\delta$-primary ideal of $R$, $ab\in J_{i}$ or $c\in \delta(J_{i})\subseteq \delta(J)$. Hence, $J$ is a $1$-absorbing $\delta$-primary ideal of $R$.
\end{proof}
\begin{prop}
Let $I$ be a $1$-absorbing $\delta$-primary ideal of $R$ such that $\sqrt{\delta(I)}=\delta(\sqrt{I})$, where $\delta$ is an ideal expansion. Then,
$\sqrt{I}$ is a $\delta$-primary ideal of $R$.
\end{prop}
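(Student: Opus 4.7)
The plan is to verify directly that $\sqrt{I}$ satisfies the definition of a $\delta$-primary ideal. Since $I$ is proper, $\sqrt{I}$ is also proper. Suppose $xy\in \sqrt{I}$ with $x\notin \sqrt{I}$; the aim is to derive $y\in \delta(\sqrt{I})$. First I would dispose of the cases in which one of $x,y$ is a unit: if $x$ is a unit, then $y=x^{-1}(xy)\in \sqrt{I}\subseteq \delta(\sqrt{I})$ by the extensivity of $\delta$; if $y$ is a unit, then $x\in \sqrt{I}$, which contradicts the choice of $x$. Hence we may assume both $x$ and $y$ are nonunit.

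From $xy\in\sqrt{I}$ we get an integer $n\geq 1$ with $(xy)^n=x^ny^n\in I$. The key move is to manufacture an element of $I$ that factors as a product of three nonunits, so that the 1-absorbing $\delta$-primary hypothesis applies. Write
\[
x^{n+1}y^n=x\cdot x^n\cdot y^n = x\cdot (x^ny^n)\in I.
\]
Because $x$ and $y$ are nonunit and $n\geq 1$, each of the three factors $x$, $x^n$, $y^n$ is a nonunit. The assumption that $I$ is $1$-absorbing $\delta$-primary then yields $x\cdot x^n=x^{n+1}\in I$ or $y^n\in \delta(I)$. The first alternative forces $x\in \sqrt{I}$, contradicting the choice of $x$, so $y^n\in \delta(I)$, and therefore $y\in \sqrt{\delta(I)}=\delta(\sqrt{I})$ by the given hypothesis. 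This is exactly what was required for $\sqrt{I}$ to be $\delta$-primary.

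There is no serious obstacle here. The only real subtlety is the combinatorial trick of padding $x^ny^n$ by one extra factor of $x$ so that the resulting element of $I$ splits as three nonunit factors (this works uniformly for every $n\geq 1$, so no separate case for $n=1$ is needed). The hypothesis $\sqrt{\delta(I)}=\delta(\sqrt{I})$ enters only in the very last step to convert the conclusion $y^n\in \delta(I)$ into $y\in \delta(\sqrt{I})$.
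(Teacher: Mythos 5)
Your proof is correct and follows essentially the same route as the paper: from $(xy)^n\in I$ you pad with extra factors of $x$ to obtain a product of three nonunits in $I$ (the paper uses $a^m\cdot a^m\cdot b^n$ with $2m\ge n$, you use $x\cdot x^n\cdot y^n$), apply the $1$-absorbing $\delta$-primary hypothesis, rule out the first alternative since $x\notin\sqrt{I}$, and finish with $\sqrt{\delta(I)}=\delta(\sqrt{I})$. Your explicit handling of the unit cases is a minor tidy-up the paper omits, not a different argument.
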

\begin{proof}
Let $ab\in \sqrt{I}$ such that $a\notin \sqrt{I}$. Hence, there exists a positive integer $n$ such that $(ab)^n\in I$. So, $a^ma^mb^n \in I$ for some positive integer $m$. Since $I$ is a $1$-absorbing $\delta$-primary ideal of $R$ and $a^{2m}\notin I$, we conclude that $b^n\in \delta(I)$. That implies $b\in \sqrt{\delta(I)}=\delta(\sqrt{I})$ and so $\sqrt{I}$ is a  $\delta$-primary ideal of $R$.
\end{proof}
\begin{prop}
Let $I$ be a proper ideal of a ring $R$ and $\delta$ be an ideal expansion such that $\delta(\delta(I))=\delta(I)$. Then $\delta(I)$ is a $1$-absorbing $\delta$-primary ideal of $R$ if and only if $\delta(I)$ is a $1$-absorbing prime ideal of $R$
\end{prop}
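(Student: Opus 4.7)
The plan is to simply apply Remark \ref{rem1}(1) to the ideal $J := \delta(I)$. That remark states: for any expansion function $\delta$ and any proper ideal $J$ with $\delta(J)=J$, the ideal $J$ is $1$-absorbing $\delta$-primary if and only if it is $1$-absorbing prime. Thus the entire content of the proposition will reduce to verifying the hypothesis $\delta(J) = J$ for $J = \delta(I)$.

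First I would observe that under the standing assumption $\delta(\delta(I)) = \delta(I)$, setting $J := \delta(I)$ gives immediately $\delta(J) = \delta(\delta(I)) = \delta(I) = J$, so $J$ is a fixed point of $\delta$. Second, I would note that both sides of the desired biconditional presume $\delta(I)$ is a proper ideal of $R$; if $\delta(I) = R$, neither the $1$-absorbing $\delta$-primary condition nor the $1$-absorbing prime condition is satisfied (by definition both require proper ideals), so the biconditional holds trivially. Hence we may assume $\delta(I)$ is a proper ideal.

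With $J = \delta(I)$ proper and $\delta(J) = J$, Remark \ref{rem1}(1) directly gives that $J$ is a $1$-absorbing $\delta$-primary ideal of $R$ if and only if $J$ is a $1$-absorbing prime ideal of $R$, which is exactly the statement to be proved. Since the argument is a one-line reduction to Remark \ref{rem1}(1), there is essentially no obstacle; the only subtlety worth flagging is the trivial case $\delta(I)=R$, which must be dismissed so that the applicability of the definition of $1$-absorbing $\delta$-primary ideal is not in question.
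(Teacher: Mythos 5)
Your proposal is correct and is essentially the same argument as the paper's: the paper handles the ``if'' direction by Remark \ref{rem1}(3) and proves the ``only if'' direction by unwinding the definition using $\delta(\delta(I))=\delta(I)$, which is precisely the content of Remark \ref{rem1}(1) applied to $J=\delta(I)$ as you do. Your explicit dismissal of the degenerate case $\delta(I)=R$ is a harmless extra precaution that the paper leaves implicit.
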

\begin{proof}
By Remark \ref{rem1}(3), we need only prove the ``only if" assertion. Let $abc\in \delta(I)$ for some nonunit elements $a,b,c\in R$. Hence $ab\in\delta(I)$ or $c\in \delta(\delta(I))=\delta(I)$. Thus $\delta(I)$ is a $1$-absorbing prime ideal of $R$.
\end{proof}
\begin{prop}\label{prop1-abs}
Let $R$ be a ring, $I$ a proper ideal of $R$ and $\delta$ be an ideal expansion. Then $I$ is a $1$-absorbing $\delta$-primary ideal if and only if whenever $I_1I_2I_3\subseteq I$ for some proper ideals $I_1$, $I_2$ and $I_3$ of $R$, then $I_1I_2\subseteq I$ or $I_3\subseteq \delta(I)$.
\end{prop}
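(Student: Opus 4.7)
The proposition is the expected ideal-theoretic reformulation of the elementwise definition, so the proof should consist of two directions, each obtained by passing between elements and the principal ideals they generate.

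For the ``if'' direction I would start with $abc \in I$ for some nonunit elements $a, b, c \in R$ and consider the three principal ideals $(a), (b), (c)$. Each is proper because none of $a, b, c$ is a unit, and $(a)(b)(c) \subseteq I$ since $abc \in I$. Applying the hypothesis to these three proper ideals gives $(a)(b) \subseteq I$ or $(c) \subseteq \delta(I)$, which translates back to $ab \in I$ or $c \in \delta(I)$, establishing that $I$ is $1$-absorbing $\delta$-primary.

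For the ``only if'' direction the natural approach is a direct contrapositive argument. Assume that $I_1 I_2 I_3 \subseteq I$ for proper ideals $I_1, I_2, I_3$, and suppose for contradiction that $I_1 I_2 \not\subseteq I$ and $I_3 \not\subseteq \delta(I)$. The first failure supplies witnesses $a_0 \in I_1$ and $b_0 \in I_2$ with $a_0 b_0 \notin I$, and the second supplies $c_0 \in I_3$ with $c_0 \notin \delta(I)$. The essential point I would emphasize is that since $I_1, I_2, I_3$ are proper, their elements $a_0, b_0, c_0$ are automatically nonunits (any unit in a proper ideal would force the ideal to be $R$). Then $a_0 b_0 c_0 \in I_1 I_2 I_3 \subseteq I$, so by the $1$-absorbing $\delta$-primary hypothesis applied to the nonunits $a_0, b_0, c_0$, either $a_0 b_0 \in I$ or $c_0 \in \delta(I)$, both of which contradict the choice of the witnesses.

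There is no real obstacle here; the only subtlety worth being explicit about is the remark that elements of proper ideals are nonunits, which is exactly what lets one invoke the elementwise definition on $a_0, b_0, c_0$. Accordingly I would keep the proof short, essentially the two paragraphs above, with no additional lemmas needed.
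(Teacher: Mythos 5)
Your proposal is correct and follows essentially the same route as the paper: the paper treats the principal-ideal direction as immediate and proves the other direction by fixing $c\in I_3\setminus\delta(I)$ and showing every product $ab$ with $a\in I_1$, $b\in I_2$ lies in $I$, which is just the direct form of your contradiction argument. Your explicit remarks that elements of proper ideals are nonunits and that $(a)(b)(c)\subseteq I$ are exactly the (implicit) justifications the paper relies on.
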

\begin{proof}
It suffices to prove the ``if'' assertion. Suppose that $I$ is a $1$-absorbing $\delta$-primary ideal and let $I_1$, $I_2$ and $I_3$ be proper ideals of $R$ such that $I_1I_2I_3\subseteq I$ and $I_3\not\subseteq \delta(I)$. Thus $abc\in I$ for every $a\in I_1$, $b\in I_2$ and $c\in I_3\setminus \delta(I)$. Since $I$ is a $1$-absorbing $\delta$-primary ideal, we then have $I_1I_2\subseteq I$, as desired.
\end{proof}
Recall from \cite{Zh} that an ideal expansion $\delta$ is said to be intersection preserving if $\delta(I_1\cap I_2\cap ...\cap I_n)=\delta(I_1)\cap \delta(I_2)\cap ...\cap \delta(I_n)$ for any ideals $I_1,...,I_n$ of $R$.
\begin{prop}\label{proi}
Let $\delta$ be an intersection preserving ideal expansion. If $I_1, I_2,..., I_n$ are $1$-absorbing $\delta$-primary ideals of $R$, and $\delta(I_i)=P$ for all $i\in \{1,2,...,n\}$, then $I_1\cap I_2\cap...\cap I_n$ is a $1$-absorbing $\delta$-primary ideal of $R$.
\end{prop}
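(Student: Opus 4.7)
The plan is to set $I = I_1 \cap I_2 \cap \cdots \cap I_n$ and verify the $1$-absorbing $\delta$-primary condition directly from the corresponding condition on each $I_i$. The key preliminary observation is that because $\delta$ is intersection preserving and $\delta(I_i) = P$ for every $i$, we have
\[
\delta(I) \;=\; \delta(I_1 \cap \cdots \cap I_n) \;=\; \delta(I_1) \cap \cdots \cap \delta(I_n) \;=\; P.
\]
This is the crucial identity that makes everything else fall into place, so I would record it first.

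Next, I would take nonunit elements $a, b, c \in R$ with $abc \in I$ and aim to show that $ab \in I$ or $c \in \delta(I)$. Suppose $c \notin \delta(I)$; by the displayed identity this means $c \notin P$, hence $c \notin \delta(I_i)$ for every $i \in \{1, \dots, n\}$. Since $abc \in I \subseteq I_i$ and each $I_i$ is $1$-absorbing $\delta$-primary, the only remaining possibility for each $i$ is $ab \in I_i$. Intersecting over $i$ gives $ab \in I_1 \cap \cdots \cap I_n = I$, which is exactly what we need.

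There is essentially no serious obstacle here; the entire argument hinges on the intersection-preserving hypothesis, which reduces $\delta(I)$ to the common value $P$. The only subtlety worth flagging in the write-up is the (trivial) check that $I$ is proper, which follows because each $I_i$ is proper and $I \subseteq I_i$, so in particular $I \neq R$. Thus the proof reduces to one application of the intersection-preserving property followed by a pigeonhole-style use of the $1$-absorbing $\delta$-primary property of each factor.
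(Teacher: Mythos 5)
Your proof is correct and is essentially the paper's argument in contrapositive form: the paper assumes $ab \notin J$ and applies the $1$-absorbing condition to a single $I_i$ with $ab \notin I_i$, while you assume $c \notin \delta(I)$ and apply it to every $I_i$; both hinge on the same identity $\delta(I_1 \cap \cdots \cap I_n) = \delta(I_1) \cap \cdots \cap \delta(I_n) = P$ from the intersection-preserving hypothesis. No substantive difference.
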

\begin{proof}
Let $abc\in J=I_1\cap I_2\cap...\cap I_n$ such that $ab\notin J$. Let $i\in \{1,2,...,n\}$ such that $ab\notin I_i$. Since $abc\in I_i$ and $I_i$ is a $1$-absorbing $\delta$-primary ideal, we conclude that $c\in \delta(I_i)=\delta(J)$. Therefore, $J$ is a $1$-absorbing $\delta$-primary ideal of $R$.
\end{proof}

\begin{prop}\label{princ}
Let $R$ be a ring and $\delta$ be an expansion function of $\mathcal{I}(R)$. Then the following statements
are equivalent:
\begin{enumerate}
\item[$(1)$] Every proper principal ideal is a $1$-absorbing $\delta$-primary ideal of $R$.
\item[$(2)$] Every proper ideal is a $1$-absorbing $\delta$-primary ideal of $R$.

\end{enumerate}
\end{prop}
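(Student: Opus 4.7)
The implication $(2) \Rightarrow (1)$ is immediate, since any proper principal ideal is in particular a proper ideal. So the real content is $(1) \Rightarrow (2)$, and my plan is to reduce the $1$-absorbing $\delta$-primary property of an arbitrary proper ideal $I$ to the $1$-absorbing $\delta$-primary property of a single, very small principal ideal contained in $I$.

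Concretely, to verify $(2)$, I would start with an arbitrary proper ideal $I$ of $R$ and a relation $abc \in I$ with $a,b,c$ nonunits in $R$. I would then focus on the principal ideal $J = (abc)$. The key observation is that $J$ is automatically proper: indeed $abc$ lies in the proper ideal $I$, so $abc$ cannot be a unit of $R$, whence $(abc) \neq R$. (The degenerate case $abc = 0$ is fine too: $(0)$ is still a proper principal ideal since $R$ has a nonzero identity.) Hypothesis $(1)$ then applies to $J$.

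Applying the $1$-absorbing $\delta$-primary property of $J = (abc)$ to the relation $abc \in J$ with nonunits $a,b,c$, I get either $ab \in (abc)$ or $c \in \delta((abc))$. In the first case, $ab \in (abc) \subseteq I$, so $ab \in I$. In the second case, because $\delta$ is an expansion function and $(abc) \subseteq I$, we have $\delta((abc)) \subseteq \delta(I)$, hence $c \in \delta(I)$. Either way, the defining condition of a $1$-absorbing $\delta$-primary ideal for $I$ is verified, which completes $(1) \Rightarrow (2)$.

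There is no real obstacle here; the only subtlety worth stating explicitly is verifying that the principal ideal $(abc)$ is proper so that hypothesis $(1)$ can legitimately be invoked, and then using the monotonicity clause $\delta(J) \subseteq \delta(I)$ from the definition of an expansion function to transfer the conclusion from $J$ to $I$.
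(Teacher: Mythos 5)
Your proposal is correct and follows essentially the same route as the paper: apply hypothesis $(1)$ to the proper principal ideal $(abc)$ and transfer the conclusion to $I$ via $(abc)\subseteq I$ and the monotonicity $\delta((abc))\subseteq\delta(I)$. In fact your write-up is slightly more careful than the paper's, which glosses over the properness of $(abc)$ and writes $c\in abcR\subseteq\delta(I)$ where $c\in\delta(abcR)\subseteq\delta(I)$ is meant.
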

\begin{proof}
Assume that $(1)$ holds and let $I$ be a proper ideal of $R$. Let $a,b,c$ be nonunit elements of $R$ such that $abc\in I$. Hence $abc\in abcR$ which implies that $ab\in abcR\subseteq I$ or $c\in abcR\subseteq \delta(I)$. Therefore $I$ is a $1$-absorbing $\delta$-primary ideal of $R$. The converse is clear.
\end{proof}

An expansion function $\delta$ of $\mathcal{I}(R)$ is said to satisfy \textit{condition $(*)$} if $\delta(I)\neq R$ for each proper ideal $I$ of $R$. Note that the identity function and radical operation are examples of expansion functions satisfying condition $(*)$.

\begin{thm}\label{char}
	Let $R$ be a ring and $\delta$ an expansion function of $\mathcal{I}(R)$ satisfying condition $(*)$ and $\delta(Jac(R))=Jac(R)$. Suppose that $\delta(xI)=x\delta(I)$ for every proper ideal $I$ of $R$ and every $x \in R$. The following statements are equivalent.

	(i) Every proper principal ideal is a 1-absorbing $\delta$-primary ideal of $R$.
	
	(ii) Every proper ideal is a 1-absorbing $\delta$-primary ideal of $R$.
	
	(iii) $R$ is local with $Jac(R)^2=(0)$, where $Jac(R)=m$ is the unique maximal ideal.
\end{thm}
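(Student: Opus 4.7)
The plan is to prove the theorem in three directions. Since (i) $\Leftrightarrow$ (ii) is already Proposition \ref{princ}, the work reduces to (ii) $\Leftrightarrow$ (iii). The direction (iii) $\Rightarrow$ (ii) is a one-line check: with $R$ local and $m^2=(0)$, any product $ab$ of two nonunits already lies in $m^2=(0)$, so every proper ideal is automatically 1-absorbing $\delta$-primary. The substantive content is (ii) $\Rightarrow$ (iii), which I will address in two pieces: first that $R$ is local, then that the unique maximal ideal squares to zero.

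The key preliminary observation is that $\delta((0))=(0)$. Indeed, applying the hypothesis $\delta(xI)=x\delta(I)$ with $x=0$ and any proper ideal $I$ (e.g.\ a maximal ideal, which exists as $R\neq 0$) collapses the right-hand side to $(0)$. With this in hand, I will apply Proposition \ref{pro1} to the 1-absorbing $\delta$-primary ideal $(0)$. It yields two cases: either $R$ is local with $m^2\subseteq(0)$, which is already (iii); or $(0)$ is $\delta$-semiprimary, which together with $\delta((0))=(0)$ forces $R$ to be an integral domain. The remainder of the argument handles this domain case.

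Assume $R$ is a domain. To show $R$ is local I argue by contradiction. If $R$ has distinct maximal ideals $m_1$ and $m_2$, the contrapositive of Theorem \ref{thm2} upgrades (ii) to the statement that \emph{every proper ideal is $\delta$-primary}. Picking nonzero $a\in m_1\setminus m_2$ and $b\in m_2\setminus m_1$, the $\delta$-primary ideal $(ab)=a\cdot(b)$ applied to the relation $a\cdot b\in(ab)$ gives either $a\in(ab)$ (which would force $b$ to be a unit via $a(1-br)=0$ in the domain) or $b\in\delta((ab))=a\,\delta((b))$, using the multiplicativity hypothesis on the proper ideal $(b)$. The second alternative yields $b\in(a)\subseteq m_1$, contradicting $b\notin m_1$. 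So $R$ is local, with maximal ideal $m$.

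To finish I show $m=(0)$ by contradiction: picking $0\ne a\in m$, the ideal $(a^3)$ is proper (as $a$ nonunit makes $a^3$ nonunit, and $a^3\ne 0$ in the domain), and by (ii) it is 1-absorbing $\delta$-primary. Testing the definition on $a\cdot a\cdot a\in(a^3)$ yields either $a^2\in(a^3)$ (forcing $a$ to be a unit via $a^2(1-ar)=0$) or $a\in\delta((a^3))$. For the latter I plan to iterate the multiplicativity hypothesis twice, on the proper ideals $(a)$ and $(a^2)$, to obtain $\delta((a^3))=a\,\delta((a^2))=a^2\delta((a))$; this gives $a=a^2d$ for some $d\in\delta((a))$, and cancelling $a$ in the domain produces $1=ad$, again forcing $a$ to be a unit. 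Both alternatives contradict $a\in m$, so $m=(0)$. The main obstacle I anticipate is precisely this inductive use of $\delta(xI)=x\delta(I)$: it applies only when the intermediate ideals $(a^k)$ are proper, which is why the argument must be routed through the domain reduction supplied by Proposition \ref{pro1}.
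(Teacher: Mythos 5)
Your proof is correct, but it takes a genuinely different route from the paper's. The paper's argument for $(ii)\Rightarrow(iii)$ is element-wise: for $x,y\in Jac(R)$ it tests the principal ideals $(x^2y)$ and $(xyz)$ (with the trick $z=x+y$) to deduce $Jac(R)^2=(0)$, using $\delta((x^2y))=y\delta((x^2))\subseteq y\,Jac(R)$ together with the hypothesis $\delta(Jac(R))=Jac(R)$ to kill the second alternative; locality is then proved separately from $M_1^2M_2\subseteq M_1\cap M_2$, where condition $(*)$ forces $\delta(M_1)=M_1$. You instead extract $\delta((0))=(0)$ from the multiplicativity hypothesis at $x=0$, feed the ideal $(0)$ into Proposition \ref{pro1} to split into the case $M^2=(0)$ (done) and the case where $R$ is a domain, and in the domain case get locality from Theorem \ref{thm2} plus cancellation and get $m=(0)$ from the iterated identity $\delta((a^3))=a^2\delta((a))$; each application of $\delta(xI)=x\delta(I)$ is legitimate because the intermediate ideals $(b)$, $(a)$, $(a^2)$ are proper. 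Your route is more structural and, notably, never invokes condition $(*)$ or $\delta(Jac(R))=Jac(R)$, so it establishes the equivalence under the multiplicativity hypothesis alone. The one caveat is that your reduction hinges on reading ``every $x\in R$'' as including $x=0$, which is what yields $\delta((0))=(0)$; this is licensed by the statement as written, but if the authors intended $x\neq 0$ you would need another source for that identity, whereas the paper's proof only ever applies multiplicativity to nonzero nonunits. Both arguments are complete; the paper's buys a self-contained computation under its stated hypotheses, yours buys brevity and a formally weaker hypothesis set.
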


\begin{proof}
	$(i)\Leftrightarrow(ii)$ Follows from Proposition \ref{princ}.

	$(i)\Rightarrow(iii)$ Assume that every proper ideal is a 1-absorbing $\delta$-primary ideal $R$. Choose $x,y \in Jac(R)$. Now, we will show that $xy=0$. If $x$ or $y$ is zero, then we are done. Assume that $x,y\neq 0$. Since $x^2y \in (x^2y)$ and $(x^2y)$ is a 1-absorbing $\delta$-primary ideal, we conclude that $x^2 \in (x^2y)$ or $y \in \delta((x^2y))=y\delta((x^2))$. Suppose that $y \in y\delta((x^2))$. Then there exists $a \in \delta((x^2))\subseteq\delta(Jac(R))=Jac(R)$ such that $y=ya$. Which implies that $y(1-a)=0$. Since $1-a$ is unit, we have $y=0$, which is a contradiction. Thus we have, $x^2 \in (x^2y)$. Then we can write $x^2=rx^2y$ for some $r \in R$. This implies that $x^2(1-ry)=0$. Since $1-ry$ is unit, we have $x^2=0$. Likewise, we get $y^2=0$. Now, choose another $z \in Jac(R)$. Since $xyz \in (xyz)$ and $(xyz)$ is a 1-absorbing $\delta$-primary, we get $xy \in (xyz)$ or $z \in\delta((xyz))=z\delta((xy))$. First, assume that $xy \in (xyz)$. Then there exists $r \in R$ such that $xy=rxyz$, which implies that $xy(1-rz)=0$. Since $1-rz$ is unit, we have $xy=0$ which completes the proof. Now, assume that $xy\notin (xyz)$, that is, $z \in \delta((xyz))=z\delta((xy))$. Then there exists $a \in \delta((xy))\subseteq Jac(R)$ such that $z=za$. This implies that $z(1-a)=0$ so that $z=0$. Now, choose $z=x+y$. Then by above argument, we have either $xy=0$ or $z=x+y=0$. If $z=x+y=0$, then we have $x=-y$ and so $xy=-y^2=0$ which completes the proof. Therefore, $Jac(R)^2=(0)$. 
	
	Now, we will show that $R$ is a local ring. Choose maximal ideals $M_{1},M_{2}$ of $R$. Now, put $I=M_{1}\cap M_{2}$. Since $M_{1}^2M_{2}\subseteq I$ and $I$ is a 1-absorbing $\delta$-primary ideal, we have either $M_{1}^2\subseteq I\subseteq M_{2}$ or $M_{2}\subseteq\delta(I)\subseteq \delta(M_{1})$. \textbf{Case 1:} Suppose that $M_{1}^2\subseteq M_{2}$. Since $M_{2}$ is prime, clearly we have $M_{1}\subseteq M_{2}$ which implies that $M_{1}=M_{2}$. \textbf{Case 2:} Suppose that $M_{2}\subseteq \delta(M_{1})$. Since $\delta$ satisfies condition $(*)$, $\delta(M_{1})$ is proper. As $M_{1}\subseteq\delta(M_{1})$ and $M_{1}$ is a maximal ideal, we have $M_{1}=\delta(M_{1})$. Then we get $M_{2}\subseteq M_{1}$, which implies that $M_{1}=M_{2}$. Therefore, $R$ is a local ring.

	$(iii)\Rightarrow(i)$ Suppose that $R$ is a local ring with $Jac(R)^2=(0)$. Let $I$ be a proper ideal of $R$ and $abc \in I$ for some nonunits $a,b,c \in R$. Then $a,b,c \in Jac(R)$ since $R$ is local. As $Jac(R)^2=(0)$, we have $ab=0 \in I$. Therefore, $I$ is a 1-absorbing $\delta$-primary ideal of $R$.
\end{proof}

It can be easily seen that, in Theorem \ref{char}, $(iii)$ always implies $(i)$ without any assumption on $\delta$. But we give some examples showing that the converse is not true if we drop the aforementioned assumptions on $\delta$. 

\begin{exam}
	Let $R=\Z_{p^3}$, where $p$ is a prime number and $\delta(I)=R$ for every proper ideal $I$ of $R$. Note that $\delta$ does not satisfy condition $(*)$ and note that every ideal $I$ of $R$ is 1-absorbing $\delta$-primary. Thus $Jac(R)^2\neq(0)$, while $R$ is a local ring.
\end{exam}

\begin{exam}
	Let $k$ be a field and consider the formal power series ring $R=k[[X]]$. Then $R$ is a local ring with unique maximal ideal $m=(X)$. Define expansion function $\delta$ as $\delta(I)=\sqrt{I}$ for every ideal $I$ of $R$. Then it is easy to see that every ideal of $R$ is a 1-absorbing $\delta$-primary ideal. Also, it is clear that $\delta$ satisfies condition $(*)$ and $\delta(Jac(R))=Jac(R)$ but not satisfy the condition $\delta(xI)=x\delta(I)$.  Furthermore, $Jac(R)^2\neq(0)$. Thus Theorem \ref{char} fails without assumption $\delta(xI)=x\delta(I)$.
\end{exam}

\begin{cor}
	Let $R$ be a ring. The following statements are equivalent. 
	
	(i) Every proper ideal is a 1-absorbing prime ideal of $R$.
	
	(ii) Every proper principal ideal is a 1-absorbing prime ideal of $R$.
	
	(iii) $R$ is local with $Jac(R)^2=(0)$.
\end{cor}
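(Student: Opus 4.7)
The plan is to deduce this corollary from Theorem \ref{char} by a suitable choice of the expansion function $\delta$. Specifically, I would take $\delta$ to be the identity expansion, namely $\delta(I)=I$ for every ideal $I$ of $R$. By Remark \ref{rem1}(1), for this choice of $\delta$ the notions of \emph{1-absorbing $\delta$-primary ideal} and \emph{1-absorbing prime ideal} coincide, so conditions $(i)$ and $(ii)$ of the corollary translate verbatim into conditions $(ii)$ and $(i)$ of Theorem \ref{char}, while condition $(iii)$ is literally the same in both statements.

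Next I would check that the identity expansion satisfies all three standing assumptions of Theorem \ref{char}. First, $\delta(I)=I\neq R$ whenever $I$ is proper, so $\delta$ satisfies condition $(*)$. Second, $\delta(Jac(R))=Jac(R)$ is trivially true. Third, for any $x\in R$ and any proper ideal $I$ of $R$ one has $\delta(xI)=xI=x\delta(I)$, which gives the required multiplicativity condition. With these hypotheses verified, invoking Theorem \ref{char} is immediate and produces the desired chain of equivalences $(i)\Leftrightarrow(ii)\Leftrightarrow(iii)$.

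I do not expect any real obstacle: the corollary is essentially a specialization, and the only thing to be careful about is to cite Remark \ref{rem1}(1) explicitly so that the reader sees why replacing ``1-absorbing $\delta$-primary'' by ``1-absorbing prime'' is justified when $\delta$ is the identity. I would keep the write-up to two or three lines, presenting it as a direct corollary of Theorem \ref{char}.
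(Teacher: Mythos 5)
Your proposal is correct and matches the paper's own proof, which likewise specializes Theorem \ref{char} to the identity expansion $\delta(I)=I$ (after noting, as you do, that all three hypotheses on $\delta$ are trivially satisfied and that 1-absorbing $\delta$-primary then means 1-absorbing prime). The only cosmetic difference is that the paper cites Proposition \ref{princ} separately for $(i)\Leftrightarrow(ii)$ and notes $(iii)\Rightarrow(i)$ is as in Theorem \ref{char}, whereas you invoke the theorem wholesale; both are fine.
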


\begin{proof}
	$(i)\Leftrightarrow (ii)$ Follows from Proposition \ref{princ}.
	
	$(ii)\Rightarrow(iii)$ Let $\delta$ be the identity expansion function, that is, $\delta(I)=I$ for every ideal $I$ of $R$. Note that $\delta$ satisfies all axioms in Theorem \ref{char}. Then $R$ is a local ring with $Jac(R)^2=(0)$.
	
	$(iii) \Rightarrow(i)$ It is similar to Theorem \ref{char} $(iii)\Rightarrow(i)$.
\end{proof}

An ideal expansion $\delta$ is called a prime expansion if for any $1$-absorbing $\delta$-primary ideal $I$ of $R$, $\delta(I)$ is a prime ideal of $R$.

\begin{prop}
Let $R$ be a local ring with maximal ideal $M$ and $\delta$ be a prime expansion function of $\mathcal{I}(R)$. Assume that one of the following conditions holds:
\begin{enumerate}
\item[$(1)$] Spec$(R)=\{\delta(0)\}$.
\item[$(2)$] Spec$(R)=\{\delta(0), M\}$ and $\delta(0)M=0$.
\end{enumerate}
Then every proper ideal of $R$ is $1$-absorbing $\delta$-primary.
\end{prop}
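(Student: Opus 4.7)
The plan is to fix a proper ideal $I$ of $R$ and take arbitrary nonunit elements $a,b,c\in R$ with $abc\in I$, aiming to show $ab\in I$ or $c\in \delta(I)$. Since $R$ is local with maximal ideal $M$, the elements $a,b,c$ lie in $M$, and the proof splits into Case $(1)$ and Case $(2)$.

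Case $(1)$ is dispatched in a single step: the maximal ideal $M$ is prime, so $\mathrm{Spec}(R)=\{\delta(0)\}$ forces $M=\delta(0)$. Monotonicity of $\delta$ applied to the inclusion $0\subseteq I$ yields $\delta(0)\subseteq \delta(I)$, hence $c\in M=\delta(0)\subseteq \delta(I)$, and the defining implication holds.

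For Case $(2)$, I would split into subcases based on which of $a,b,c$ lies in the prime ideal $\delta(0)$. If $c\in \delta(0)$, then $c\in \delta(0)\subseteq \delta(I)$ by the same monotonicity argument. If $a\in \delta(0)$ (respectively $b\in \delta(0)$), then combining the hypothesis $\delta(0)M=0$ with $b\in M$ (respectively $a\in M$) gives $ab\in \delta(0)\cdot M=(0)\subseteq I$. This leaves the subcase $a,b,c\in M\setminus \delta(0)$, in which the primality of $\delta(0)$ yields $abc\notin \delta(0)$; consequently $I\not\subseteq \delta(0)$ and $\delta(I)\supsetneq \delta(0)$.

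The hard part is this remaining subcase, where one must conclude $c\in \delta(I)$. My plan is to show $\delta(I)=M$; since $\mathrm{Spec}(R)=\{\delta(0),M\}$ and $\delta(I)$ already strictly contains $\delta(0)$, it is enough to establish that $\delta(I)$ is prime, for then $\delta(I)=M$ and $c\in M=\delta(I)$ follows. The prime expansion hypothesis is the key tool here, since it converts $1$-absorbing $\delta$-primary ideals into prime $\delta$-images. The main obstacle is to exhibit a $1$-absorbing $\delta$-primary ideal $J$ with $\delta(J)\subseteq \delta(I)$ and $abc\in J\setminus \delta(0)$: prime expansion then forces $\delta(J)$ to be prime, and since $\delta(J)\supsetneq \delta(0)$ this gives $\delta(J)=M$, whence $M=\delta(J)\subseteq \delta(I)$ by monotonicity. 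A natural candidate for $J$ is a $\delta$-primary ideal (automatically $1$-absorbing $\delta$-primary by Remark \ref{rem1}(4)) constructed from $abc$ together with $\delta(0)$, identifying it via the classical primary structure of nonzero proper ideals in the one-dimensional local domain $R/\delta(0)$.
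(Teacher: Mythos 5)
Your case $(1)$ and the first three subcases of case $(2)$ are correct and coincide with what the paper actually does: the paper's branch ``$\delta(I)=\delta(0)$'' is handled by exactly your observation that $c\in\delta(0)\subseteq\delta(I)$ or $ab\in\delta(0)M=(0)\subseteq I$. The genuine gap is the remaining subcase $a,b,c\in M\setminus\delta(0)$, which you correctly single out as the crux but do not close. Your plan needs a $1$-absorbing $\delta$-primary ideal $J$ with $\delta(J)\subseteq\delta(I)$ and $\delta(J)\supsetneq\delta(0)$, and the sketch breaks at two points. First, the only tool for forcing $\delta(J)\subseteq\delta(I)$ is monotonicity from $J\subseteq I$, yet your candidate $J$ is built ``from $abc$ together with $\delta(0)$'', and $\delta(0)\not\subseteq I$ is entirely possible in this subcase, so that containment is never arranged. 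Second, the classical primary structure of $R/\delta(0)$ only certifies that ideals strictly between $\delta(0)$ and $R$ are $M$-primary in the ordinary sense ($ab\in J\Rightarrow a\in J$ or $b\in\sqrt{J}=M$); upgrading this to \emph{$\delta$-primary} requires $M\subseteq\delta(J)$, which is precisely the kind of conclusion you are trying to reach. So $J$ is never legitimately certified as $1$-absorbing $\delta$-primary, and the prime-expansion hypothesis cannot be applied to it.

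Moreover, this gap cannot be repaired from the hypotheses as literally stated. The paper's own proof of case $(2)$ simply asserts that $\delta(I)\in\{\delta(0),M\}$ for the arbitrary proper ideal $I$, i.e.\ it uses that $\delta(I)$ is prime for \emph{every} ideal, whereas the definition of prime expansion only guarantees this when $I$ is already known to be $1$-absorbing $\delta$-primary --- the very thing being proved. Taken literally, the statement fails: let $R=k[[x,y]]/(xy,y^2)$, so that $\mathrm{Spec}(R)=\{(\bar y),M\}$ with $M=(x,\bar y)$ and $(\bar y)M=0$, and set $\delta(J)=J+(\bar y)$ if $J\subseteq(x^3,\bar y)$ and $\delta(J)=J+M$ otherwise. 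This is an expansion with $\delta(0)=(\bar y)$; every ideal $J\subseteq(x^3,\bar y)$ with $J\not\subseteq(\bar y)$ fails to be $1$-absorbing $\delta$-primary (take $x\cdot x\cdot x^{n-1}\in J$ with $n$ the least order of an $x$-component occurring in $J$), so $\delta$ is a prime expansion in the paper's sense, and yet $I=(x^3)$ is not $1$-absorbing $\delta$-primary since $x\cdot x\cdot x\in I$, $x^2\notin I$, $x\notin\delta(I)=(x^3,\bar y)$. So your instinct that the last subcase is the hard part is exactly right; but the result is only recoverable under the stronger reading that $\delta(I)$ is prime for every ideal $I$, and under that reading the subcase is immediate ($\delta(I)\supsetneq\delta(0)$ forces $\delta(I)=M$) and your auxiliary-ideal machinery is unnecessary.
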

\begin{proof}
Let $I$ be a proper ideal of $R$ and assume that $(1)$ holds. Since $\delta$ is a prime expansion, we conclude that $\delta(I)=\delta(0)$ is the maximal ideal of $R$. Clearly $I$ is a $1$-absorbing $\delta$-primary ideal of $R$. Now, assume that Spec$(R)=\{\delta(0), M\}$ and $\delta(0)M=0$. If $\delta(I)=M$, we have then $I$ is $1$-absorbing $\delta$-primary. In the remaining case, $\delta(I)=\delta(0)$. Let $abc\in I$ for some nonunit elements $a,b,c\in R$ such that $c\notin \delta(0)$. As $I\subseteq \delta(0)$, we get that either $a\in \delta(0)$ or $b\in \delta(0)$. Thus $ab=0\in I$ which gives that $I$ is a $1$-absorbing $\delta$-primary ideal of $R$.
\end{proof}
Let $f:R\rightarrow S$ be a ring homomorphism and $\delta$, $\gamma$ expansion functions of $\mathcal{I}(R)$ and $\mathcal{I}(S)$ respectively. Recall from \cite{BF} that $f$  is called a $\delta\gamma$-homomorphism if  $\delta(f^{-1}(I)) = f^{-1}(\gamma(I))$ for each ideal $I$ of $S$. Also note that if $f$ is a $\delta\gamma$-epimorphism and $I$ is an ideal of $R$ containing $ker(f)$, then  $\gamma(f(I)) = f(\delta(I))$.
\begin{thm}
 Let $f: R \rightarrow S$ be a ring $\delta\gamma$-homomorphism where $\delta$, $\gamma$ are expansion functions of $\mathcal{I}(R)$ and $\mathcal{I}(S)$ respectively. Suppose that f($a$) is nonunit in $S$ for every nonunit element $a$ in $R$. Then the following statements hold.
 \begin{enumerate}
\item[$(1)$] If $J$ is a $1$-absorbing $\gamma$-primary ideal of $S,$ then $f^{-1}(J)$ is a $1$-absorbing $\delta$-primary ideal of $R$.
\item[$(2)$] If $f$ is an epimorphism and $I$ is a proper ideal of $R$  containing $\operatorname{ker}(f),$ then $I$ is a $1$-absorbing $\delta$-primary ideal of $R$ if and only if $f(I)$ is a $1$-absorbing $\gamma$-primary ideal of $S$.
\end{enumerate}
\end{thm}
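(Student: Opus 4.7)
The plan is to push the 1-absorbing $\delta$-primary condition through the homomorphism using the two identities supplied just before the theorem: for a $\delta\gamma$-homomorphism one has $\delta(f^{-1}(J))=f^{-1}(\gamma(J))$ for every ideal $J$ of $S$, and when $f$ is an epimorphism with $\ker(f)\subseteq I$ one has both $f^{-1}(f(I))=I$ and $\gamma(f(I))=f(\delta(I))$. Part (2) will then largely reduce to Part (1).

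For $(1)$, I first check that $f^{-1}(J)$ is proper: if $1\in f^{-1}(J)$ then $1=f(1)\in J$, contradicting that $J$ is proper. Now take nonunit elements $a,b,c\in R$ with $abc\in f^{-1}(J)$. The standing hypothesis gives that $f(a),f(b),f(c)$ are nonunits in $S$, and $f(a)f(b)f(c)=f(abc)\in J$. Since $J$ is 1-absorbing $\gamma$-primary, either $f(ab)=f(a)f(b)\in J$ or $f(c)\in \gamma(J)$. The first case yields $ab\in f^{-1}(J)$, and in the second case the $\delta\gamma$-homomorphism identity gives
\[
c\in f^{-1}(\gamma(J))=\delta(f^{-1}(J)),
\]
which is exactly what is required.

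For $(2)$, the $(\Leftarrow)$ direction is immediate: if $f(I)$ is a $1$-absorbing $\gamma$-primary ideal of $S$, then by part $(1)$ the ideal $f^{-1}(f(I))$ is a $1$-absorbing $\delta$-primary ideal of $R$, and the assumption $\ker(f)\subseteq I$ together with surjectivity gives $f^{-1}(f(I))=I$. For $(\Rightarrow)$, I first verify $f(I)$ is proper: if $1\in f(I)$, write $1=f(i)$ with $i\in I$, so $1-i\in\ker(f)\subseteq I$, forcing $1\in I$, a contradiction. Now let $x,y,z$ be nonunits of $S$ with $xyz\in f(I)$. By surjectivity pick $a,b,c\in R$ with $f(a)=x$, $f(b)=y$, $f(c)=z$; these preimages are automatically nonunit, because any ring homomorphism sends units to units, so a unit preimage would contradict that $x,y,z$ are nonunit. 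Writing $xyz=f(j)$ for some $j\in I$ gives $abc-j\in\ker(f)\subseteq I$, hence $abc\in I$. Since $I$ is $1$-absorbing $\delta$-primary, either $ab\in I$ or $c\in\delta(I)$; applying $f$ and using $\gamma(f(I))=f(\delta(I))$ yields $xy=f(ab)\in f(I)$ or $z=f(c)\in f(\delta(I))=\gamma(f(I))$, as needed.

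I do not anticipate a real obstacle: the argument is essentially bookkeeping around the two transfer identities. The only points that require care are (a) verifying properness of $f^{-1}(J)$ and $f(I)$ so that both candidate ideals qualify as 1-absorbing $\delta$-primary ideals, and (b) in $(2)(\Rightarrow)$ correctly invoking $\gamma(f(I))=f(\delta(I))$, which requires the hypothesis $\ker(f)\subseteq I$ (without it this identity can fail). Once these are in place the preimage/image manipulations are routine.
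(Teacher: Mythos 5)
Your proposal is correct and follows essentially the same route as the paper's own proof: part (1) by pushing the triple product through $f$ and using $f^{-1}(\gamma(J))=\delta(f^{-1}(J))$, and part (2) by reducing the backward direction to part (1) via $f^{-1}(f(I))=I$ and handling the forward direction with preimages and $\gamma(f(I))=f(\delta(I))$. The only difference is that you spell out a few details the paper leaves implicit (properness of $f^{-1}(J)$ and $f(I)$, and why the chosen preimages are nonunits), which is fine.
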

\begin{proof}
$(1)$  Assume that $abc \in f^{-1}(J),$ for some nonunit elements $a, b, c \in R .$ Then $f(a)f(b)f(c)\in J$. Thus $f(a)f(b) \in J$ or $f(c) \in \gamma(J),$ which implies that $ab \in f^{-1}(J)$ or $ c \in f^{-1}(\gamma(J))=\delta(f^{-1}(J)) .$ Therefore, $f^{-1}(J)$ is a $1$-absorbing $\delta$-primary ideal of $R$.\\
$(2)$ Suppose that $f(I)$ is an $1$-absorbing $\gamma$-primary ideal of $S$. Since $I = f^{-1} (f (I))$, we conclude that $I$ is a $1$-absorbing $\delta$-primary ideal of $R$ by $(1)$. Conversely, let $x, y, z$ be nonunit elements of $S$ with $xyz \in f(I)$. Then there exist $a,b,c \in R$ such that $x=f(a)$, $y=f(b)$ and $z=f(c)$ with $f(a b c)=x y z \in f(I) .$ Since $\operatorname{ker}(f) \subseteq I$, we then have $abc \in I .$ Since
$I$ is a $1$-absorbing $\delta$-primary ideal of $R$ and $abc \in I$, we conclude that $a b \in I$ or $c \in \delta(I)$  which gives that $x y \in f(I)$ or $z \in f(\delta(I))=\gamma(f(I))$. Thus $f(I)$ is a $1$-absorbing $\delta$-primary ideal of $S$.

\end{proof}

Let $\delta$ be an expansion function of $\mathcal{I(R)}$ and $I$ an ideal of $R$. Then the function $\bar{\delta} : \frac{R}{I}\longrightarrow \frac{R}{I}$ defined by $\bar{\delta}(\frac{J}{I}) = \frac{\delta(J)}{I}$ for all ideals $I \subseteq J$, becomes an expansion function of $\frac{R}{I}$. Then, we have the following result.
\begin{cor}
Let $R$ be a ring, $\delta$ an expansion function of $\mathcal{I(R)}$ and $I\subseteq J$ be proper ideals of $R$. Assume that $a+I$ is a nonunit element of $\frac{R}{I}$ for every nonunit element $a\in R$. Then J is a $1$-absorbing  $\delta$-primary ideal of $R$ if and only if $\frac{J}{I}$ is a $1$-absorbing $\bar{\delta}$-primary ideal of $\frac{R}{I}$.
\end{cor}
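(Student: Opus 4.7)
The plan is to deduce this corollary directly from part (2) of the preceding theorem by applying it to the canonical projection $\pi : R \to R/I$, $a \mapsto a+I$. Since $\pi$ is a surjective ring homomorphism with $\ker(\pi)=I \subseteq J$, once we verify the two hypotheses of that theorem, the biconditional follows because $\pi(J) = J/I$.

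First I would check that $\pi$ is a $\delta\bar{\delta}$-homomorphism, i.e.\ that $\delta(\pi^{-1}(K)) = \pi^{-1}(\bar{\delta}(K))$ for every ideal $K$ of $R/I$. Any ideal $K$ of $R/I$ has the form $L/I$ for a unique ideal $L$ of $R$ containing $I$. Then $\pi^{-1}(K) = L$, so $\delta(\pi^{-1}(K)) = \delta(L)$. On the other hand, by definition $\bar{\delta}(L/I) = \delta(L)/I$ (note that $L \subseteq \delta(L)$ guarantees $I \subseteq \delta(L)$, so this quotient makes sense), and therefore $\pi^{-1}(\bar{\delta}(K)) = \pi^{-1}(\delta(L)/I) = \delta(L)$. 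The two sides coincide.

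Next I would verify the nonunit assumption of the theorem: if $a \in R$ is nonunit, then $\pi(a) = a+I$ is nonunit in $R/I$; this is precisely what we have assumed in the hypothesis of the corollary.

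With both hypotheses confirmed, part (2) of the theorem applies to $\pi$, $R$, $S = R/I$, $\delta$, $\gamma = \bar{\delta}$, and the ideal $J \supseteq I = \ker(\pi)$: it states that $J$ is a $1$-absorbing $\delta$-primary ideal of $R$ if and only if $\pi(J) = J/I$ is a $1$-absorbing $\bar{\delta}$-primary ideal of $R/I$, which is exactly the desired conclusion. No step here is a genuine obstacle; the only subtlety is being careful that $\bar{\delta}$ is well-defined on $\mathcal{I}(R/I)$, which uses the expansion property $L \subseteq \delta(L)$ to ensure $I \subseteq \delta(L)$ whenever $I \subseteq L$.
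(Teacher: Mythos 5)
Your proposal is correct and is exactly the argument the paper intends: the corollary is stated immediately after the $\delta\gamma$-homomorphism theorem precisely so that one applies part (2) to the canonical projection $\pi:R\to R/I$, checking that $\pi$ is a $\delta\bar{\delta}$-epimorphism with $\ker(\pi)=I\subseteq J$ and that $\pi$ preserves nonunits. Your verification that $\delta(\pi^{-1}(L/I))=\delta(L)=\pi^{-1}(\delta(L)/I)$ supplies the details the paper leaves implicit.
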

\begin{prop}
Let $S$ be a multiplicatively closed subset of a ring $R$ and $\delta_S$ an expansion function of $\mathcal{I}(S^{-1}R)$ such that $\delta_S(S^{-1}I)=S^{-1}(\delta(I))$ for each ideal $I$ of $R$. If $I$ is a 1-absorbing $\delta$-primary ideal of $R$ such that $I \cap S = \varnothing$, then $S^{-1}I$ is a 1-absorbing $\delta_S$-primary ideal of $S^{-1}R$.
\end{prop}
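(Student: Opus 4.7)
\medskip
\noindent\textbf{Proof proposal.} The plan is to mimic the standard localization argument for (1-absorbing) primary ideals while being careful about which elements are forced to be nonunits.

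First I would check that $S^{-1}I$ is proper: if $\frac{1}{1}\in S^{-1}I$ then $s\in I$ for some $s\in S$, contradicting $I\cap S=\varnothing$. Next, pick nonunits $\frac{a_1}{s_1},\frac{a_2}{s_2},\frac{a_3}{s_3}\in S^{-1}R$ with $\frac{a_1a_2a_3}{s_1s_2s_3}\in S^{-1}I$. The first key observation is that each $a_i$ must be a nonunit in $R$: indeed, if some $a_i$ were a unit in $R$ then $\frac{a_i}{1}$ would be a unit in $S^{-1}R$, and since $\frac{1}{s_i}$ is automatically a unit (as $s_i\in S$), the product $\frac{a_i}{s_i}$ would be a unit, contradicting the choice. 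So $a_1,a_2,a_3$ are genuine nonunits of $R$. From $\frac{a_1a_2a_3}{s_1s_2s_3}\in S^{-1}I$ we obtain some $v\in S$ with $va_1a_2a_3\in I$.

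Now I would like to apply the 1-absorbing $\delta$-primary hypothesis to the product $(va_1)a_2a_3\in I$, but for this I need $va_1$ to be a nonunit of $R$. This is the only delicate point: if $va_1$ were a unit of $R$, say $(va_1)w=1$ for some $w\in R$, then $a_1(vw)=1$ shows $a_1$ is a unit, contradicting the previous paragraph. Hence $va_1$ is a nonunit, and since $a_2,a_3$ are also nonunits, the 1-absorbing $\delta$-primary property applied to $(va_1)a_2a_3\in I$ gives either $va_1a_2\in I$ or $a_3\in \delta(I)$. In the first case,
\[
\frac{a_1}{s_1}\cdot\frac{a_2}{s_2}=\frac{a_1a_2}{s_1s_2}=\frac{va_1a_2}{vs_1s_2}\in S^{-1}I,
\]
and in the second case $\frac{a_3}{s_3}\in S^{-1}\delta(I)=\delta_S(S^{-1}I)$ by hypothesis on $\delta_S$. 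This establishes that $S^{-1}I$ is 1-absorbing $\delta_S$-primary.

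The only potentially tricky step is the bookkeeping about units: one must remember that being a nonunit in $S^{-1}R$ does pull back to being a nonunit in $R$ (because units localize to units), and then argue the same for $va_1$ via cancellation. Everything else is a routine manipulation of fractions together with the compatibility $\delta_S(S^{-1}I)=S^{-1}\delta(I)$ built into the hypothesis.
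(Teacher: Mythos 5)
Your proposal is correct and follows essentially the same route as the paper's proof: clear denominators to get $va_1a_2a_3\in I$ for some $v\in S$, apply the $1$-absorbing $\delta$-primary property to $(va_1)a_2a_3$, and translate the conclusion back through $\delta_S(S^{-1}I)=S^{-1}\delta(I)$. Your extra bookkeeping (that nonunits of $S^{-1}R$ have nonunit numerators and that $va_1$ remains a nonunit) is exactly the justification the paper leaves implicit, so there is nothing to add.
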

\begin{proof}
Let $I$ be a 1-absorbing $\delta$-primary ideal of $R$ such that $I\cap S = \varnothing$ and $\frac{a}{s}\frac{b}{t}\frac{c}{r}\in S^{-1}I$ for some nonunit elements $a, b, c \in R$ and $s,t,r\in S$ such that $\frac{a}{s}\frac{b}{t}\notin S^{-1}I$. Then $xabc \in I$ for some $x\in S$. Since $I$ is a $1$-absorbing $\delta$-primary and $xab \notin I$, we conclude that
$c \in \delta(I)$. Thus $\frac{c}{r}\in S^{-1}(\delta(I))=\delta_S(S^{-1}I)$ which completes the proof.
\end{proof}

Let $S$ be a multiplicatively closed subset of a ring $R$ and $I$ an ideal of $R$. The next example shows that if $S^{-1}I$ is a 1-absorbing $\delta_S$-primary ideal of $S^{-1}R$, then $I$ need not to be a 1-absorbing $\delta$-primary ideal of $R$.
\begin{exam}
Let $p\neq q$ be two prime numbers. Set $I=pq\Z$ and $\delta$ be an ideal expansion such that $\delta(I)=I+q\Z$ for each ideal $I$ of $\Z$. Clearly, $I$ is not a 1-absorbing $\delta$-primary ideal of $\Z$ because $qqp\in I$ but neither $q^2\in I$ nor $p\in \delta(I)$. Now, let $S=\Z\setminus p\Z$ and note that $S^{-1}I=S^{-1}(p\Z)$. Let $\frac{a}{r_1}\frac{b}{r_2}\frac{c}{r_3}\in S^{-1}I$ for some nonunit elements $\frac{a}{r_1},\frac{b}{r_2},\frac{c}{r_3}\in S^{-1}\Z$. Note that $\frac{x}{r}\in S^{-1}\Z$ is nonunit if and only if $x\in p\Z$. Thus $a\in p\Z$ and $b\in p\Z$. Which gives that $\frac{a}{r_1}\frac{b}{r_2}\in S^{-1}I $ and hence $S^{-1}I$ is a $1$-absorbing $\delta_S$-primary ideal.
\end{exam}

Let $R_{1}$ and $R_{2}$ be two rings, let $\delta_{i}$ be an expansion function of $\mathcal{I}(\mathcal{R}_{i})$ for each $i \in\{1,2\}$ and $R=R_{1} \times R_{2} .$ For a proper ideal $I_{1} \times I_{2},$ the function $\delta_{\times}$ defined by $\delta_{\times}(I_{1} \times I_{2})=$
$\delta_{1}(I_{1}) \times \delta_{2}(I_{2})$ is an expansion function of $\mathcal{I}(\mathcal{R}) .$ The following result characterizes the $1$-absorbing $\delta$-primary ideals of the direct product of rings.

\begin{thm}\label{thmp} Let $R_{1}$ and $R_{2}$ be rings, $R=R_{1} \times R_{2} $ and let $\delta_{i}$ be an expansion function of $\mathcal{I}(\mathcal{R}_{i})$ for $i =1,2$. Then the following statements are equivalent:
\begin{enumerate}
\item[$(1)$] $I$ is a $1$-absorbing $\delta_\times$-primary ideal of $R$.
\item[$(2)$] $I$ is a $\delta_\times$-primary ideal of $R$.
\item[$(3)$] Either $I=I_{1} \times R_{2}$, where $I_{1}$ is a $\delta_{1}$-primary ideal of $R_{1}$ or $I=R_{1} \times I_{2}$, where $I_{2}$ is a $\delta_{2}$-primary ideal of $R_{2}$ or $I=I_{1} \times I_{2},$ where $I_{1}$ and $I_{2}$ are proper ideals of $R_{1}, R_{2},$ respectively with $\delta_{1}(I_{1})=R_{1}$ and $\delta_{2}(I_{2})=R_{2}$.

\end{enumerate}

\end{thm}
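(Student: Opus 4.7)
The plan is to prove the cycle $(2)\Rightarrow(1)\Rightarrow(2)\Rightarrow(3)\Rightarrow(2)$, which gives $(1)\Leftrightarrow(2)\Leftrightarrow(3)$. The implication $(2)\Rightarrow(1)$ is immediate from Remark \ref{rem1}(4). For the key implication $(1)\Rightarrow(2)$, I would simply observe that, since $R_1$ and $R_2$ are nonzero, the ring $R=R_1\times R_2$ admits at least two distinct maximal ideals of the form $M_1\times R_2$ and $R_1\times M_2$, so $R$ is not local. Theorem \ref{thm2} then forces every $1$-absorbing $\delta_\times$-primary ideal of $R$ to already be $\delta_\times$-primary.

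For $(3)\Rightarrow(2)$, I would use the fact that every ideal of $R$ has the form $I_1\times I_2$ and check the $\delta_\times$-primary condition in each of the three cases. If $I=I_1\times R_2$ with $I_1$ a $\delta_1$-primary ideal of $R_1$ and $(a_1,a_2)(b_1,b_2)\in I$, then $a_1b_1\in I_1$, so $a_1\in I_1$ or $b_1\in\delta_1(I_1)$; this yields $(a_1,a_2)\in I$ or $(b_1,b_2)\in\delta_1(I_1)\times R_2=\delta_\times(I)$. The case $I=R_1\times I_2$ is symmetric. Finally, if $I=I_1\times I_2$ with $\delta_1(I_1)=R_1$ and $\delta_2(I_2)=R_2$, then $\delta_\times(I)=R$, so the $\delta_\times$-primary condition holds trivially for every factorization.

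For $(2)\Rightarrow(3)$, I would write $I=I_1\times I_2$ and split according to whether the factors are proper. If $I_1=R_1$ and $a_2b_2\in I_2$, then $(1,a_2)(1,b_2)=(1,a_2b_2)\in I$, so $\delta_\times$-primarity gives $(1,a_2)\in I$ or $(1,b_2)\in R_1\times\delta_2(I_2)$, i.e., $a_2\in I_2$ or $b_2\in\delta_2(I_2)$; hence $I_2$ is $\delta_2$-primary. The case $I_2=R_2$ is symmetric. If both $I_1$ and $I_2$ are proper, apply the $\delta_\times$-primary condition to $(1,0)(0,1)=(0,0)\in I$: since $(1,0)\notin I$ (because $I_1\neq R_1$), we must have $(0,1)\in\delta_\times(I)=\delta_1(I_1)\times\delta_2(I_2)$, which forces $\delta_2(I_2)=R_2$. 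The symmetric argument applied to $(0,1)(1,0)$ gives $\delta_1(I_1)=R_1$.

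There is no genuinely hard step here; the clever move is invoking Theorem \ref{thm2} to collapse the distinction between $1$-absorbing $\delta_\times$-primary and $\delta_\times$-primary ideals in the non-local ring $R_1\times R_2$, which reduces everything to the standard characterization of $\delta$-primary ideals of a direct product. The only subtlety is remembering that in the third clause of $(3)$ the equality $\delta_\times(I)=R$ is what makes the $\delta_\times$-primary condition vacuous.
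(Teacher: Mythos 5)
Your proposal is correct and follows essentially the same route as the paper: the equivalence $(1)\Leftrightarrow(2)$ via Theorem \ref{thm2} (since $R_1\times R_2$ is never local) together with Remark \ref{rem1}(4), and the equivalence $(2)\Leftrightarrow(3)$ via the componentwise analysis of $I=I_1\times I_2$, including the $(1,0)(0,1)\in I$ trick when both factors are proper. You merely spell out the converse direction $(3)\Rightarrow(2)$ in more detail than the paper, which dismisses it as clear.
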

\begin{proof}
$(1)\Leftrightarrow (2)$. This follows from Theorem \ref{thm2}.\\
 $(2) \Leftrightarrow(3)$ Let $I$ be a $\delta_\times$-primary ideal of $R .$ Hence $I$ has the form $I=I_{1} \times I_{2}$ where $I_{1}$ and $I_{2}$ are ideals of $R_{1}$ and $R_{2}$ respectively. Without loss of generality, we may assume that $I=I_{1} \times R_{2}$ for some proper ideal $I_{1}$ of $R_{1} .$ We show that $I_{1}$ is a $\delta$-primary ideal of $R_{1}$. Deny. Then there are $a, b \in R_{1}$ such that $a b \in I_{1}, a \notin I_{1}$ and $b \notin \delta_{1}(I_{1}) .$ Hence $(a, 1)(b, 1) \in I_{1} \times R_{2} .$ Which implies that $(a,1) \in I_{1} \times R_{2}$ or $(b,1) \in \delta_{\times}(I_{1} \times R_{2})$ and so $a \in I_{1}$ or $b \in \delta_1(I_{1}),$ which gives a contradiction. Now suppose that both $I_{1}$ and $I_{2}$ are proper. As $(1,0)(0,1) \in I_{1} \times I_{2}$ and $(1,0),(0,1) \notin I_{1} \times I_{2},$ we have $(1,0),(0,1) \in \delta_{\times}(I_{1} \times I_{2})=\delta_{1}(I_{1}) \times \delta_{2}(I_{2}) .$ Therefore $\delta_{1}(I_{1})=R_{1}$ and $\delta_{2}(I_{2})=R_{2}$. The converse is clear.
\end{proof}

The following example proves that the condition ``$\delta(I_i)=P$ for all $i\in \{1,2,...,n\}$" is necessary in Proposition \ref{proi}.

\begin{exam}
Consider $R=\Z\times \Z$, $I_1=4\Z\times \Z$ and $I_2=\Z\times 9\Z$. Let $\delta$ be an expansion function of $\mathcal{I}(\mathcal{\Z})$ such that for every ideal $I$ of $\Z$ we have $\delta(I)=\sqrt{I}+J$ where $J=2\Z$. Thus $\delta_\times(I_1)=2\Z\times \Z$ and $\delta_\times (I_2)=\Z\times\Z$. Moreover, $I_1$ and $I_2$ are $1$-absorbing $\delta_\times$-primary ideal. But $I_1\cap I_2=4\Z\times 9\Z$ is not a $1$-absorbing $\delta_\times$-primary ideal by Theorem \ref{thmp}.
\end{exam}

Let $A$ be a ring and $E$ an $A$-module. Then $A\ltimes E$, the {\it trivial} ({\it ring}) {\it extension of} $A$ {\it by} $E$, is the ring whose additive structure is that of the external direct sum $A \oplus E$ and whose multiplication is defined by $(a, e) (b, f) := (ab, af + be)$ for all $a,b \in A$ and all $e, f \in E$. (This construction is also known by other terminology and other notation, such as the {\it idealization} $A(+)E$.) The basic properties of trivial ring extensions are summarized in the books \cite{Huck}, \cite{Glaz}. Trivial ring extensions have been studied or generalized extensively, often because of their usefulness in constructing new classes of examples of rings satisfying various properties (cf. \cite{Anderson2, DEM, DMZ, km} ). In addition, for an ideal $I$ of $A$ and a submodule $F$ of $E$, $I\ltimes F$ is an ideal of $A\ltimes E$ if and only if $IE\subseteq F$. Moreover, for an
expansion function $\delta$ of $A$, it is clear that $\delta_\ltimes$ defined as $\delta_\ltimes(I\ltimes F) = \delta(I)\ltimes E$ is an expansion function of $A\ltimes E$. Also as usual, if $c\in A$ then $(F:c)=\{e\in E\mid ce\in F\}$.

\begin{thm}
Let $A$ be a ring, $E$ an $A$-module and $\delta$ be an expansion function of $\mathcal{I}(A)$. Let $I$ be an ideal of $A$ and $F$ a submodule of $E$ such that $IE\subseteq F$. Then the following statement hold:
\begin{enumerate}
\item[$(1)$] If $I\ltimes F$ is a $1$-absorbing $\delta_\ltimes$-primary ideal of $A\ltimes E$, then $I$ is a $1$-absorbing $\delta$-primary ideal of $A$.
\item[$(2)$] Assume that $(F:c)=F$ for every $c\in A\setminus I$. Then  $I\ltimes F$ is a $1$-absorbing $\delta_\ltimes$-primary ideal of $A\ltimes E$ if and only if $I$ is a $1$-absorbing $\delta$-primary ideal of $A$.
\end{enumerate}
\end{thm}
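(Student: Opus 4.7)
The plan is to exploit the embedding $a \mapsto (a,0)$ of $A$ into $A\ltimes E$ for part (1), and to do a case analysis keyed on whether $c$ lies in $\delta(I)$ for the nontrivial direction of part (2). Throughout, I will use the standard fact that $(x,h)$ is a unit in the trivial extension $A\ltimes E$ if and only if $x$ is a unit in $A$, together with the containment $IE \subseteq F$ and the definition $\delta_\ltimes(I\ltimes F) = \delta(I)\ltimes E$.

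For part (1), I would take nonunits $a,b,c \in A$ with $abc \in I$ and consider $(a,0),(b,0),(c,0) \in A\ltimes E$; these are nonunits by the unit criterion, and their product is $(abc,0) \in I\ltimes F$. The 1-absorbing $\delta_\ltimes$-primary hypothesis on $I\ltimes F$ then yields either $(ab,0) = (a,0)(b,0) \in I\ltimes F$, whence $ab \in I$, or $(c,0) \in \delta(I)\ltimes E$, whence $c \in \delta(I)$. The forward direction of (2) is just part (1). For the converse in (2), I take nonunits $(a,e),(b,f),(c,g) \in A\ltimes E$ with $(a,e)(b,f)(c,g) \in I\ltimes F$. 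The unit criterion gives that $a,b,c$ are nonunits in $A$. Expanding the product yields $abc \in I$ and $abg + acf + bce \in F$, and the 1-absorbing $\delta$-primary property of $I$ gives either $ab \in I$ or $c \in \delta(I)$.

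If $c \in \delta(I)$, then $(c,g) \in \delta(I)\ltimes E = \delta_\ltimes(I\ltimes F)$, and we are done. In the remaining case $c \notin \delta(I)$, the inclusion $I \subseteq \delta(I)$ forces $c \notin I$, so by hypothesis $(F:c) = F$; moreover the 1-absorbing property now forces $ab \in I$, so $abg \in IE \subseteq F$, and hence $c(af+be) = acf + bce = (abg + acf + bce) - abg \in F$. Applying $(F:c) = F$ gives $af + be \in F$, so $(a,e)(b,f) = (ab, af+be) \in I\ltimes F$. The main obstacle is precisely extracting $af+be \in F$ from the single relation on the mixed component; this is where the case distinction matters, since the hypothesis $(F:c) = F$ is only usable once we know $c \notin I$, and that in turn comes from having ruled out the alternative $c \in \delta(I)$, which would otherwise have closed the argument immediately.
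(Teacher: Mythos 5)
Your proposal is correct and follows essentially the same argument as the paper: part (1) via the elements $(a,0),(b,0),(c,0)$, and part (2) via the case split on $c\in\delta(I)$, using $ab\in I$ to put $abg$ in $F$ and then invoking $(F:c)=F$. Your explicit remark that $c\notin\delta(I)$ forces $c\notin I$ (so that the hypothesis on $(F:c)$ actually applies) is a small point the paper leaves implicit, but the route is the same.
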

\begin{proof}
$(1)$ Assume that $I\ltimes F$ is a $1$-absorbing $\delta_{\ltimes}$-primary ideal of $A\ltimes E$ and let $a,b,c$ be nonunit elements of $A$ such that $abc\in I$. Thus $(a,0)(b,0)(c,0)=(abc,0)\in I\ltimes F$ which implies that $(a,0)(b,0)\in I\ltimes F$ or $(c,0)\in \delta_\ltimes(I\ltimes F)=\delta(I)\ltimes E$. Therefore $ab\in I$ or $c\in \delta(I)$ and so $(1)$ holds.\\
$(2)$ By $(1)$, it suffices to prove the "if" assertion. Let $(a,s), (b,t),(c,r)$ be nonunit elements of $A\ltimes E$ such
that $(a,s)(b,t)(c,r)=(abc,bcs+act+abr)\in I\ltimes F$. Clearly, $abc\in I$ and so $ab\in I$ or $c\in \delta(I)$ since $I$
 is a $1$-absorbing $\delta$-primary ideal of $A$. If $c\in \delta(I)$, then $(c,r)\in \delta(I)\ltimes E=\delta_\ltimes(I \ltimes F)$. Hence, we may assume that $c\notin\delta(I)$. Then $ab\in I$. As $bcs+act+abr\in F$ and $abr\in F$, we get that $bcs+act\in F$. This implies $bs+at\in (F:c)=F$ and so $(a,s)(b,t)=(ab,at+bs)\in I\ltimes F$. Therefore $I\ltimes F$ is a $1$-absorbing $\delta_\ltimes$-primary ideal of $A\ltimes E$.

\end{proof}
\begin{cor}
Let $A$ be a ring, $E$ an $A$-module and $\delta$ be an expansion function of $\mathcal{I}(A)$. Let $I$ be a proper ideal of $A$. Then $I\ltimes E$ is a $1$-absorbing $\delta_\ltimes$-primary ideal of $A\ltimes E$ if and only if $I$ is a $1$-absorbing $\delta$-primary ideal of $A$.
\end{cor}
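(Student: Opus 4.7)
The plan is to derive this corollary as an immediate consequence of part $(2)$ of the preceding theorem, specialized to the case $F=E$.

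First, I would verify that the hypotheses of the theorem are satisfied. Since $I$ is an ideal of $A$ and $E$ is an $A$-module, clearly $IE\subseteq E$, so $I\ltimes E$ is indeed an ideal of $A\ltimes E$. Next, I would check the crucial colon condition $(F:c)=F$ for every $c\in A\setminus I$. With $F=E$, this reduces to verifying that $(E:c)=\{e\in E\mid ce\in E\}=E$, which is trivially true for every $c\in A$ (and in particular for every $c\in A\setminus I$), since $E$ is closed under scalar multiplication.

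Having confirmed both hypotheses, I would simply invoke part $(2)$ of the theorem directly to conclude that $I\ltimes E$ is a $1$-absorbing $\delta_\ltimes$-primary ideal of $A\ltimes E$ if and only if $I$ is a $1$-absorbing $\delta$-primary ideal of $A$.

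There is essentially no obstacle here, as the proof is a one-line application of the stronger theorem above; the only mildly nontrivial point is recognizing that the colon condition becomes automatic when $F$ is taken to be all of $E$.
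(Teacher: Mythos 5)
Your proposal is correct and is exactly the intended argument: the paper states this corollary without proof precisely because it is the special case $F=E$ of part $(2)$ of the preceding theorem, where the colon condition $(E:c)=E$ holds trivially. Nothing further is needed.
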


\end{document}